\documentclass[11pt,reqno]{amsart}

\usepackage[a4paper,margin=26mm]{geometry}
\usepackage[T1]{fontenc}
\usepackage[utf8]{inputenc}
\usepackage{lmodern}
\usepackage{microtype}
\usepackage{amsmath,amssymb}
\usepackage{enumitem}
\usepackage{xcolor}
\usepackage[authoryear,round]{natbib}
\usepackage[colorlinks=true,linkcolor=blue!55!black,citecolor=blue!55!black,urlcolor=blue!55!black]{hyperref}
\usepackage[nameinlink,capitalise,noabbrev]{cleveref}
\hypersetup{
  pdftitle={Projection geometry and relaxed quasi-orthogonality for inf-sup stable Galerkin methods},
  pdfauthor={Tsogtgerel Gantumur},
  pdfsubject={Projection geometry and relaxed general quasi-orthogonality for inf-sup stable Galerkin methods},
  pdfkeywords={adaptive finite element methods; general quasi-orthogonality; inf-sup stability; compatible projection chains; Schauder decompositions; block LU factorization},
  pdfdisplaydoctitle=true
}

\setlist[enumerate]{label=(\roman*),leftmargin=2.2em}
\allowdisplaybreaks

\newcommand{\R}{\mathbb R}
\newcommand{\Z}{\mathbb Z}
\newcommand{\Torus}{\mathbb T}
\newcommand{\trans}{\mathsf{T}}
\newcommand{\cL}{\mathcal L}
\newcommand{\cG}{\mathcal G}
\newcommand{\Id}{\operatorname{Id}}
\newcommand{\ran}{\operatorname{ran}}
\newcommand{\sgn}{\operatorname{sgn}}
\newcommand{\spann}{\operatorname{span}}
\newcommand{\ip}[2]{\left(#1,#2\right)}
\newcommand{\norm}[1]{\lVert #1\rVert}
\newcommand{\abs}[1]{\lvert #1\rvert}
\newcommand{\gqo}{\mathrm{gqo}}
\newcommand{\qo}{\mathrm{qo}}

\theoremstyle{plain}
\newtheorem{theorem}{Theorem}[section]
\newtheorem{proposition}[theorem]{Proposition}
\newtheorem{lemma}[theorem]{Lemma}
\newtheorem{corollary}[theorem]{Corollary}

\theoremstyle{definition}
\newtheorem{definition}[theorem]{Definition}

\theoremstyle{remark}
\newtheorem{remark}[theorem]{Remark}

\title[Projection geometry and relaxed quasi-orthogonality]
{Projection geometry and relaxed quasi-orthogonality\\
for inf-sup stable Galerkin methods}

\author{Tsogtgerel Gantumur}

\address{McGill University, Montr\'{e}al, QC, Canada}
\address{National University of Mongolia, Ulaanbaatar, Mongolia}
\address{Institute of Mathematics and Digital Technology, Mongolian Academy of Sciences, Ulaanbaatar, Mongolia}

\email{gantumur.tsogtgerel@mcgill.ca}

\subjclass[2020]{Primary 65N30, 65N50; Secondary 46C05, 46B15, 47B37}
\keywords{adaptive finite element method, general quasi-orthogonality,
inf-sup stability, compatible projection chain, Schauder decomposition,
block \(LU\)-factorization}

\begin{document}

\begin{abstract}
We give an elementary, coordinate-free proof that uniformly inf-sup stable
nested Petrov--Galerkin methods on Hilbert spaces satisfy relaxed general
quasi-orthogonality.  More precisely, the accumulated squared Galerkin
increments over any window of \(N\) consecutive levels are bounded by the
squared error at the beginning of the window times \(N^\sigma\), 
where \(\sigma<1\), and both \(\sigma\) and the constant prefactor depend explicitly
only on a uniform bound for the Galerkin projections.

The proof uses the Hilbert-space angle between consecutive blocks of a
uniformly bounded compatible projection chain, together with a dyadic
decomposition and duality.
It avoids matrix representations, wavelet bases, and
\(LU\)-factorization.  For symmetric indefinite problems, we relate the
argument to the positive and negative spectral splittings of the Galerkin
detail spaces and obtain a valid finite-window version of the
sign-decomposition approach.

We also construct a fixed self-adjoint involution and a fixed nested,
uniformly inf-sup stable Galerkin sequence for which full general
quasi-orthogonality fails.  The same construction yields, for every
\(0<\alpha<1\), finite-support targets whose full-tail ratios grow at least like
\(N^\alpha\).
Thus uniform inf-sup stability guarantees
sublinear finite-window quasi-orthogonality, whereas full
quasi-orthogonality requires additional hierarchical information in
general.
\end{abstract}

\maketitle

\section{Introduction}

For linear variational problems governed by a symmetric coercive bilinear
form, nested Galerkin approximations \(u_\ell\) satisfy the Pythagoras identity
\[
  \norm{u-u_{\ell+1}}^2
  +
  \norm{u_{\ell+1}-u_\ell}^2
  =
  \norm{u-u_\ell}^2
\]
in the associated energy norm.
This orthogonality mechanism played a central role
throughout the foundational development of convergence and rate-optimality
theory for adaptive finite element methods; see, for example,
\citet{Doerfler1996,MorinNochettoSiebert2000,Stevenson2007}.
Its telescoping consequence controls the cumulative size of the successive
Galerkin increments.  For indefinite, saddle-point, or nonsymmetric problems,
the exact identity is generally unavailable.  In perturbative settings it can
sometimes be replaced by problem-specific quasi-orthogonality arguments, but
for strongly indefinite or nonsymmetric problems a more structural substitute
is needed.

General quasi-orthogonality was introduced into the axiomatic theory of
adaptivity to replace exact Pythagoras by a square-summability estimate for
the Galerkin increments; see \citet{CFPP2014}.
In its strongest form it requires
\begin{equation}\label{eq:intro-full-gqo}
 \sum_{k=\ell}^{\infty}\norm{u_{k+1}-u_k}^2
 \le C_{\gqo}\norm{u-u_\ell}^2
 \qquad(\ell\ge0) ,
\end{equation}
where the norm is now that of the ambient Hilbert space.
For strongly nonsymmetric and indefinite problems, proving
\eqref{eq:intro-full-gqo} became a substantial technical obstacle.
In two contemporaneous works, \citet{FeischlFEMBEM} treated the
nonsymmetric Johnson--N\'ed\'elec FEM--BEM coupling, while
\citet{Feischl2019} treated the indefinite Taylor--Hood discretization of
the Stokes system.  Both proofs exploit a connection between general
quasi-orthogonality and bounded infinite-dimensional
\(LU\)-factorizations, together with problem-specific wavelet-type Riesz
bases and off-diagonal decay estimates.  
At the structural level, the triangular factor provides a bounded change
of coordinates under which the Galerkin projections become ordinary
coordinate truncations, reducing general quasi-orthogonality to a
Bessel-type estimate.

A decisive later observation of \citet{Feischl2022} is that adaptive
convergence does not require the full infinite tail in
\eqref{eq:intro-full-gqo}.  It is enough to have, for some $\delta>0$,
\begin{equation}\label{eq:intro-relaxed}
 \sum_{k=\ell}^{\ell+N-1}\norm{u_{k+1}-u_k}^2
 \le C_{\qo}N^{1-\delta}\norm{u-u_\ell}^2.
\end{equation}
Uniform inf-sup stability was shown there to imply
\eqref{eq:intro-relaxed} for general nested Petrov--Galerkin methods.  The
proof passes to finite hierarchical matrices and derives a sublinear growth
bound for their $LU$-factors by triangular truncation and Schatten-norm
interpolation.  This removes quasi-orthogonality as a separate assumption in a
large class of adaptive methods.

The purpose of the present paper is to isolate the Hilbert-space geometry
behind this implication.  Rather than studying stiffness matrices and their
factorizations, we work directly with the uniformly bounded family of nested
Galerkin projections.  We establish a quantitative finite-window
square-function estimate for such projection families, with an explicit
sublinear exponent depending only on their common operator bound.  Uniform
discrete inf-sup stability then yields \eqref{eq:intro-relaxed} directly for
nested Petrov--Galerkin methods.

For an informal statement of the main results, let
\((P_j)_{j\ge0}\) be a uniformly bounded family of mutually compatible
projections associated with increasing approximation spaces in a Hilbert
space; the precise definition is given in
\Cref{sec:projection-geometry}.  Write
\[
  K:=\sup_{j\ge0}\norm{P_j},
  \qquad
  E_j:=\ran(P_j-P_{j-1}),
  \qquad
  P_{-1}:=0.
\]
Our contributions are as follows.

\begin{enumerate}
\item
We give a direct finite-window proof, with explicit dependence on \(K\), of
the power-type square-function estimates implicit in classical
superreflexive basis theory: For every consecutive interval \(I\) of
\(n\) indices and every family \(y_j\in E_j\), \(j\in I\), we have
\[
  \frac{1}{4K^2(1+\vartheta)n^{\sigma}}
  \sum_{j\in I}\norm{y_j}^2
  \le
  \big\|\sum_{j\in I}y_j\big\|^2
  \le
  (1+\vartheta)n^{\sigma}
  \sum_{j\in I}\norm{y_j}^2,
\]
where 
\[
  \vartheta=\sqrt{1-K^{-2}},
  \qquad
  \sigma=\log_2(1+\vartheta) < 1.
\]
The argument is elementary and basis-free.

\item
We apply this projection theorem to nested uniformly inf-sup stable
Petrov--Galerkin methods.  If the bilinear form has continuity constant
\(C\) and the discrete inf-sup constant is \(\gamma\), then the associated
Galerkin projections satisfy
\(K\le {C}/{\gamma}\).
Consequently, the relaxed exponent and its prefactor are uniform over all
target solutions and all nested sequences sharing these structural constants.
In particular, the guaranteed exponent cannot degenerate from one adaptive
trajectory to another while \(C/\gamma\) remains bounded.

\item
In the self-adjoint indefinite case, we split each Galerkin detail space
into its positive and negative spectral parts.  The resulting
finite-window sign operator is a block multiplier for the detail
decomposition, and its norm grows at most as \(N^\sigma\).  This gives a
valid finite-window substitute for the generally unavailable uniformly
bounded infinite positive--negative splitting; see \Cref{sec:sign}.

\item We show that this failure is genuine.  Using the trigonometric Schauder
basis of the power-weighted Hilbert space
$L^2(\Torus,|t|^\alpha\,dt)$, we construct a self-adjoint involution and a fixed
uniformly stable nested Galerkin sequence for which
\eqref{eq:intro-full-gqo} fails.  
Moreover, finite-support targets have full-tail ratios growing at least
as \(N^\alpha\).
The spaces and
operator are fixed; only the target varies.
\end{enumerate}

As an adaptive consequence, the relaxed quasi-orthogonality estimate obtained
above fits directly into the abstract argument of
\citet{Feischl2022}.  Under the standard axioms of
adaptivity, it therefore yields linear estimator convergence and the usual
rate-optimality conclusions.
For completeness, \Cref{sec:afem} records this implication and its
standard rate-optimality consequence.

From the viewpoint of basis theory, the square-function estimate is a direct
and fully quantitative Hilbert-space realization of power-type bounds implicit
in the classical work of \citet{GurariiGurarii1971} and
\citet{James1972}.  Those results imply qualitative Hilbertian and Besselian
power estimates for uniformly basic sequences in superreflexive spaces, and
hence sublinear power growth in Hilbert space.  Our contribution is an
elementary finite-window argument formulated directly for compatible
projection chains, allowing higher-dimensional detail spaces and giving an
explicit exponent and constants depending only on the common projection bound
\(K\).  This form is particularly suited to Galerkin projection families.
The weighted Fourier counterexample is based on the classical theory of
conditional trigonometric bases, in the precise power-weighted form developed
by \citet{Ansorena2023}. 
Its new role here is to produce a fixed uniformly
stable self-adjoint Galerkin hierarchy for which full quasi-orthogonality
fails.

The paper is organized as follows.
\Cref{sec:projection-geometry} develops the projection-angle argument and
proves two-sided finite-window square-function estimates for uniformly bounded
compatible projection chains.
\Cref{sec:relaxed-gqo} shows that nested uniformly inf-sup stable
Petrov--Galerkin projections satisfy the abstract hypotheses and derives
quantitative relaxed quasi-orthogonality.
\Cref{sec:afem} records the adaptive consequences.
\Cref{sec:sign} gives the sign-decomposition interpretation for self-adjoint
indefinite problems, and \Cref{sec:counterexample} constructs the
counterexample to full GQO.
The paper concludes with a discussion of quantitative and structural
extensions.

\section{Projection angles and finite-window square functions}
\label{sec:projection-geometry}

The quasi-orthogonality argument depends on a Galerkin discretization only
through the geometry of its solution projections.  For nested trial and test
spaces, the corresponding Galerkin projections form a compatible projection
chain, while uniform discrete inf-sup stability provides a common bound for
their operator norms.  These facts will be verified in
\Cref{sec:galerkin-setting}.  We first isolate the resulting
Hilbert-space statement, independently of any variational problem.

Let \(H\) be a real Hilbert space, and let
\((P_j)_{j\ge0}\subset\cL(H)\) be a compatible projection chain, meaning that
each \(P_j\) is a projection and
\begin{equation}
\label{eq:abstract-nest}
  P_jP_k=P_{\min\{j,k\}}
  \qquad (j,k\ge0).
\end{equation}
In particular, the projections commute and their ranges form an increasing
family.
Set
\[
  P_{-1}:=0,
  \qquad
  \Delta_j:=P_j-P_{j-1},
  \qquad
  E_j:=\ran\Delta_j.
\]
The operators \(\Delta_j\) are the associated detail projections.  
The compatibility relation implies
\[
  \Delta_i\Delta_j=\delta_{ij}\Delta_j.
\]
Consequently, for \(0\le r\le m\), we have
\[
  \ran(P_m-P_{r-1})
  =
  \bigoplus_{j=r}^m E_j.
\]
We assume
that the projection chain is uniformly bounded:
\begin{equation}
\label{eq:abstract-K}
  K:=\sup_{j\ge0}\norm{P_j}<\infty.
\end{equation}
The case \(P_j=0\) for every \(j\) is trivial and will henceforth be excluded;
therefore \(K\ge1\).

\subsection{The angle between consecutive blocks}

The key geometric input is that the uniform projection bound \(K\) controls
the angle between an initial block of details and the consecutive block that
follows it.  Define
\[
  \vartheta_K:=\sqrt{1-K^{-2}}.
\]
Since \(K\ge1\), we have \(0\le\vartheta_K<1\).  Geometrically,
\(\vartheta_K\) is the largest possible cosine of the angle between two
consecutive detail blocks under the bound \(K\).  When \(K=1\), every \(P_j\)
is an orthogonal projection, so \(\vartheta_K=0\) and consecutive blocks are
orthogonal.

\begin{lemma}[Consecutive-block angle]
\label{lem:block-angle}
Let \(0\le m<s\), and set
\[
  U:=\ran P_m=\bigoplus_{j=0}^m E_j,
  \qquad
  V:=\ran(P_s-P_m)=\bigoplus_{j=m+1}^s E_j.
\]
Then we have
\begin{equation}
\label{eq:block-angle}
  |(u,v)_H|
  \le
  \vartheta_K\norm{u}_H\norm{v}_H
  \qquad (u\in U,\ v\in V),
\end{equation}
and consequently
\begin{equation}
\label{eq:two-block-synthesis}
  \norm{u+v}_H^2
  \le
  (1+\vartheta_K)
  \bigl(\norm{u}_H^2+\norm{v}_H^2\bigr).
\end{equation}
\end{lemma}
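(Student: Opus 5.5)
The plan is to use a single bounded projection that separates \(U\) from \(V\). Consider the projection \(Q:=P_m\) restricted to the finite block \(U\oplus V=\ran P_s\). For \(u\in U\) and \(v\in V\) we have \(P_m u=u\). Moreover \(P_m v=P_m(P_s-P_m)v'=0\), using \(P_mP_s=P_m\) and \(P_mP_m=P_m\) from the compatibility relation \eqref{eq:abstract-nest}. Hence
\[
  P_m(u+v)=u,
  \qquad
  \norm{u}_H\le K\norm{u+v}_H
  \qquad (u\in U,\ v\in V).
\]
This is the only input from the chain; the rest is two-dimensional Euclidean geometry.

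Next I convert this into the angle bound \eqref{eq:block-angle}. By homogeneity, and after replacing \(v\) by \(-v\) if needed, it suffices to treat \(\norm{u}_H=\norm{v}_H=1\) with \(c:=(u,v)_H\ge0\), and the cases \(u=0\) or \(v=0\) are trivial. Apply the inequality to \(u\) and \(-tv\) for every real \(t\). Since \(-tv\in V\), we get
\[
  1\le K^2\norm{u-tv}_H^2=K^2(1-2tc+t^2).
\]
Minimizing over \(t\) at \(t=c\) gives \(1\le K^2(1-c^2)\), that is \(c^2\le 1-K^{-2}=\vartheta_K^2\). Undoing the normalization yields \eqref{eq:block-angle}. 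This minimization is the step that really carries the geometry: the distance from a unit vector of \(U\) to the subspace \(V\) is at least \(1/K\), and this distance equals the sine of the angle between them.

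Finally, \eqref{eq:two-block-synthesis} follows by expanding the square and using \(2ab\le a^2+b^2\):
\[
  \norm{u+v}_H^2
  \le \norm{u}_H^2+\norm{v}_H^2+2\vartheta_K\norm{u}_H\norm{v}_H
  \le (1+\vartheta_K)\bigl(\norm{u}_H^2+\norm{v}_H^2\bigr).
\]
I expect no real obstacle. The one point needing care is the identity \(P_m v=0\) for \(v\in V\), which is exactly where the compatibility relation \eqref{eq:abstract-nest} enters. The argument also uses only \(\norm{P_m}\le K\), not \(\norm{P_s}\).
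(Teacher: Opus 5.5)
Your proposal is correct and matches the paper's proof essentially step for step. In both, $P_m$ fixes $U$ and annihilates $V$, which gives $\norm{u}_H\le K\norm{u-tv}_H$ for all real $t$. Minimizing over $t$ yields the angle bound, and expanding the square with $2ab\le a^2+b^2$ gives the two-block estimate.
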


\begin{proof}
By \eqref{eq:abstract-nest}, \(P_m\) acts as the identity on \(U\) and
vanishes on \(V\).  Hence \(P_m|_{U\oplus V}\) is the projection onto \(U\)
along \(V\), with norm at most \(K\).  Thus, for every \(t\in\R\), we have
\[
  \norm{u}_H
  =
  \norm{P_m(u-tv)}_H
  \le
  K\norm{u-tv}_H.
\]
For $v\ne0$, minimize the right-hand side over $t$.  Since
\[
 \inf_{t\in\R}\norm{u-tv}_H^2
 =\norm{u}_H^2-\frac{\abs{\ip{u}{v}_H}^2}{\norm{v}_H^2},
\]
we obtain \eqref{eq:block-angle}.  The case $v=0$ is trivial.  
Finally, we infer
\[
 \norm{u+v}_H^2
 \le\norm{u}_H^2+\norm{v}_H^2
   +2\vartheta_K\norm{u}_H\norm{v}_H,
\]
and $2ab\le a^2+b^2$ gives \eqref{eq:two-block-synthesis}.
\end{proof}

\subsection{A dyadic upper square-function estimate}

The preceding angle estimate can be iterated over a dyadic splitting of a
finite interval.  Define
\[
  \sigma_K:=\log_2(1+\vartheta_K).
\]
Since \(0\le\vartheta_K<1\), we have \(0\le\sigma_K<1\).

\begin{theorem}[Upper square-function estimate]\label{thm:upper-square}
Let $I=\{r,r+1,\ldots,r+n-1\}$ be a consecutive interval of $n\ge1$
indices.  For arbitrary $y_j\in E_j$, $j\in I$, one has
\begin{equation}\label{eq:upper-square}
\big\|\sum_{j\in I}y_j\big\|_H^2
 \le (1+\vartheta_K) n^{\sigma_K}\sum_{j\in I}\norm{y_j}_H^2 .
\end{equation}
\end{theorem}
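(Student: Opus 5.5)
We argue by strong induction on \(n\), splitting the window dyadically and applying \Cref{lem:block-angle} once at each level of the splitting. Write \(c:=1+\vartheta_K\), so \(n^{\sigma_K}=c^{\log_2 n}\), and let \(A(n)\) be the best constant in
\[
\big\|\sum_{j\in I}y_j\big\|_H^2\le A(n)\sum_{j\in I}\norm{y_j}_H^2
\]
over all intervals \(I\) of length \(n\). It suffices to show \(A(1)=1\) and the recursion
\[
A(n)\le c\,\max\{A(\lceil n/2\rceil),A(\lfloor n/2\rfloor)\}\qquad(n\ge2),
\]
and then to check that this recursion gives \(A(n)\le c\,n^{\sigma_K}\).

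For the recursion, split \(I=I_1\cup I_2\), where \(I_1=\{r,\dots,m\}\) contains the first \(\lceil n/2\rceil\) indices and \(I_2=\{m+1,\dots,r+n-1\}\) the rest. Put \(u=\sum_{j\in I_1}y_j\) and \(v=\sum_{j\in I_2}y_j\). Then \(u\in\ran P_m\) and \(v\in\ran(P_{r+n-1}-P_m)\), so \eqref{eq:two-block-synthesis} gives
\[
\norm{u+v}_H^2\le c\,(\norm u_H^2+\norm v_H^2).
\]
The lemma is stated with \(U=\ran P_m\), the full initial block, which contains \(u\); this is why the lemma is phrased that way. Applying the induction hypothesis to \(I_1\) and \(I_2\) separately yields the recursion.

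The only delicate point is the non-dyadic case. For \(n=2^k\) the recursion gives exactly \(A(n)\le c^k=n^{\sigma_K}\). For general \(n\) we do not unroll the recursion. Instead we prove the claimed bound by strong induction, using that for \(n\ge2\) we have \(\lceil n/2\rceil\le n\cdot\tfrac23\)?

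That naive approach is fragile, so we take a cleaner route: pad the window. Choose \(k\) with \(2^{k-1}<n\le 2^k\) and set \(y_j=0\) for the extra indices \(j=r+n,\dots,r+2^k-1\); this is allowed since \(0\in E_j\), and the chain continues indefinitely. The dyadic bound then gives
\[
\big\|\sum y_j\big\|_H^2\le c^k\sum\norm{y_j}_H^2 .
\]
Since \(k<\log_2 n+1\), we get \(c^k\le c\cdot c^{\log_2 n}=(1+\vartheta_K)n^{\sigma_K}\), which is exactly \eqref{eq:upper-square} with the stated prefactor. This also explains where the factor \(1+\vartheta_K\) comes from.

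The main obstacle is therefore only bookkeeping. We must verify that every split in the dyadic tree has the consecutive-block form required by \Cref{lem:block-angle}, with the left part lying in \(\ran P_m\) and the right part in \(\ran(P_s-P_m)\). This holds because each subinterval is consecutive. We must also handle zero vectors, which is trivial.
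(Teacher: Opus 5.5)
Your proof is correct and follows the paper's argument. You use the same recursion $A(n)\le(1+\vartheta_K)\max\{A(\lfloor n/2\rfloor),A(\lceil n/2\rceil)\}$ from \Cref{lem:block-angle}, giving $A(n)\le(1+\vartheta_K)^{\lceil\log_2 n\rceil}\le(1+\vartheta_K)n^{\sigma_K}$, and the only difference is cosmetic: you reach non-dyadic $n$ by zero-padding to the next power of two (legitimate, since the chain is indexed by all $j\ge0$ and $0\in E_j$) instead of unrolling the floor/ceiling recursion, and you should delete the abandoned ``$\lceil n/2\rceil\le\tfrac23 n$?'' fragment.
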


\begin{proof}
Let $A(n)$ be the smallest constant for which the asserted estimate holds for
all intervals of length $n$.  Clearly $A(1)=1$.  Split an interval of length
$n\ge2$ into consecutive left and right halves of lengths
$n_-:=\lfloor n/2\rfloor$ and $n_+:=\lceil n/2\rceil$.  If $u$ and $v$ are
the corresponding partial sums, \Cref{lem:block-angle} gives
\[
 \norm{u+v}_H^2
 \le(1+\vartheta_K)(\norm{u}_H^2+\norm{v}_H^2).
\]
Hence we have
\[
 A(n)\le(1+\vartheta_K)\max\{A(n_-),A(n_+)\}.
\]
Iteration through at most $\lceil\log_2n\rceil$ generations yields
\[
 A(n)\le(1+\vartheta_K)^{\lceil\log_2n\rceil}
 \le(1+\vartheta_K)n^{\log_2(1+\vartheta_K)}.\qedhere
\]
\end{proof}

\subsection{Duality and the reverse square-function estimate}

The reverse estimate follows by applying the upper square-function bound to the
adjoint projection chain and exploiting the resulting biorthogonality identity.

\begin{theorem}[Two-sided square-function estimate]\label{thm:two-sided-square}
Under \eqref{eq:abstract-nest}--\eqref{eq:abstract-K}, let $I$ be a
consecutive interval of $n\ge1$ indices.  Then every family
$y_j\in E_j$, $j\in I$, satisfies
\begin{equation}\label{eq:two-sided-square}
 \frac{1}{B_Kn^{\sigma_K}}
 \sum_{j\in I}\norm{y_j}_H^2
 \le
 \big\|\sum_{j\in I}y_j\big\|_H^2
 \le
 A_Kn^{\sigma_K}\sum_{j\in I}\norm{y_j}_H^2,
\end{equation}
where
\begin{equation}\label{eq:ABconstants}
 A_K=1+\vartheta_K,
 \qquad
 B_K=4K^2(1+\vartheta_K).
\end{equation}
\end{theorem}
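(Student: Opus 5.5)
The upper bound in \eqref{eq:two-sided-square} is exactly \Cref{thm:upper-square} with \(A_K=1+\vartheta_K\), so only the lower bound needs an argument. For that I will use duality. The idea is to apply the upper estimate to the adjoint projection chain and test the sum \(x:=\sum_{j\in I}y_j\) against a suitable adjoint sum.

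\emph{Step 1: the adjoint chain.} I will check that \((P_j^*)_{j\ge0}\) is again a compatible projection chain with the same bound \(K\). Taking adjoints in \eqref{eq:abstract-nest} gives \(P_k^*P_j^*=(P_jP_k)^*=P_{\min\{j,k\}}^*\). Since the right-hand side is symmetric in \(j,k\), this is \eqref{eq:abstract-nest} for the adjoints. Also \(\norm{P_j^*}=\norm{P_j}\), so the bound is still \(K\). The detail projections of this chain are \(\Delta_j^*\), with ranges \(E_j^*:=\ran\Delta_j^*\). Hence \(\vartheta_K\) and \(\sigma_K\) are unchanged, and \Cref{thm:upper-square} applies verbatim to families \(z_j\in E_j^*\).

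\emph{Step 2: biorthogonality.} From \(\Delta_i\Delta_j=\delta_{ij}\Delta_j\) and \(y_i\in E_i\), I get \(\Delta_j x=y_j\) for \(j\in I\). Put \(S:=\sum_{j\in I}\norm{y_j}_H^2\) and \(w:=\sum_{j\in I}\Delta_j^*y_j\). Then
\[
 S=\sum_{j\in I}(\Delta_jx,y_j)_H=(x,w)_H\le\norm{x}_H\norm{w}_H .
\]

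\emph{Step 3: bounding \(w\).} Each summand \(\Delta_j^*y_j\) lies in \(E_j^*\), so Step 1 gives
\[
 \norm{w}_H^2\le(1+\vartheta_K)n^{\sigma_K}\sum_{j\in I}\norm{\Delta_j^*y_j}_H^2 .
\]
Using \(\norm{\Delta_j^*}=\norm{P_j-P_{j-1}}\le2K\) (this also holds for \(j=0\), where \(P_{-1}=0\)), the right-hand side is at most \(4K^2(1+\vartheta_K)n^{\sigma_K}S=B_Kn^{\sigma_K}S\).

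Combining Steps 2 and 3 gives \(S\le\norm{x}_H\sqrt{B_Kn^{\sigma_K}S}\). Dividing by \(\sqrt S\) when \(S>0\) (the case \(S=0\) is trivial) and squaring yields \(S\le B_Kn^{\sigma_K}\norm{x}_H^2\), which is the lower bound. No step here is a real obstacle. The only point needing care is Step 1, namely that the adjoints again form a chain satisfying \eqref{eq:abstract-nest} with the same bound, so that \Cref{thm:upper-square} can be reused with identical constants. The factor \(4K^2\) in \(B_K\) comes from the crude bound \(\norm{\Delta_j}\le2K\).
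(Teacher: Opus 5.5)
Your proposal is correct and follows the paper's proof essentially line by line. The upper bound is \Cref{thm:upper-square}, and the lower bound uses the same four ingredients: the biorthogonality identity \((x,w)_H=\sum_{j\in I}\norm{y_j}_H^2\) with \(w=\sum_{j\in I}\Delta_j^*y_j\), the upper estimate applied to the adjoint chain \((P_j^*)\), the crude bound \(\norm{\Delta_j}\le 2K\), and Cauchy--Schwarz; your explicit check that the adjoints satisfy \eqref{eq:abstract-nest} with the same bound \(K\) simply spells out a step the paper states without verification.
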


\begin{proof}
Only the first inequality remains to be proved.  Put
\[
 w:=\sum_{j\in I}y_j,
 \qquad
 z_j:=\Delta_j^*y_j,
 \qquad
 z:=\sum_{j\in I}z_j.
\]
Since $\Delta_j\Delta_i=\delta_{ij}\Delta_i$, we have
\begin{equation}\label{eq:biorth-identity}
 \ip{w}{z}_H
 =\sum_{i,j\in I}\ip{\Delta_jy_i}{y_j}_H
 =\sum_{j\in I}\norm{y_j}_H^2.
\end{equation}
The adjoint projections \((P_j^*)\) form a compatible projection chain with
the same uniform bound \(K\).  Since \(z_j\in\ran\Delta_j^*\),
\Cref{thm:upper-square} gives
\[
  \norm{z}_H^2
  \le
  A_K n^{\sigma_K}
  \sum_{j\in I}\norm{z_j}_H^2.
\]
Moreover, one has
\[
 \norm{\Delta_j}\le\norm{P_j}+\norm{P_{j-1}}\le2K,
\]
and thus
\[
 \norm{z}_H^2
 \le4K^2A_Kn^{\sigma_K}\sum_{j\in I}\norm{y_j}_H^2.
\]
Combining this with \eqref{eq:biorth-identity} and the Cauchy--Schwarz inequality, we conclude
\[
 \sum_{j\in I}\norm{y_j}_H^2
 \le2K\sqrt{A_K}\,n^{\sigma_K/2}\norm{w}_H
 \Big(\sum_{j\in I}\norm{y_j}_H^2\Big)^{1/2}.
\]
Squaring proves the first inequality in \eqref{eq:two-sided-square}.
\end{proof}

The two-sided square-function estimate also controls operators acting
independently on the detail spaces.  This consequence will be used for the
finite-window sign splitting in \Cref{sec:sign}.

\begin{corollary}[Finite-window block multipliers]
\label{cor:block-multiplier}
Let \(I\) be a consecutive interval of \(n\) indices, and set
\[
  E_I:=\bigoplus_{j\in I}E_j,
\]
equipped with the norm inherited from \(H\).  For each \(j\in I\), let
\(R_j:E_j\to E_j\) be linear with \(\norm{R_j}\le1\), and define
\[
  \mathcal R_I\Bigl(\sum_{j\in I}y_j\Bigr)
  :=
  \sum_{j\in I}R_jy_j.
\]
Then we have
\begin{equation}
\label{eq:block-multiplier}
  \norm{\mathcal R_I}_{\cL(E_I)}
  \le
  2K(1+\vartheta_K)n^{\sigma_K}.
\end{equation}
\end{corollary}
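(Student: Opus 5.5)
The plan is to sandwich \(\mathcal R_I\) between the two inequalities of \Cref{thm:two-sided-square}.

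First I would check that \(\mathcal R_I\) is well defined. Since \(\Delta_i\Delta_j=\delta_{ij}\Delta_j\), the sum \(\bigoplus_{j\in I}E_j\) is direct. So every \(w\in E_I\) has a unique expansion \(w=\sum_{j\in I}y_j\) with \(y_j=\Delta_jw\in E_j\), and \(\mathcal R_I\) is a linear map \(E_I\to E_I\).

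Next, fix such a \(w\). Each \(R_jy_j\) lies in \(E_j\), so the upper bound in \Cref{thm:two-sided-square} applies to the family \((R_jy_j)_{j\in I}\):
\[
  \norm{\mathcal R_Iw}_H^2
  \le A_Kn^{\sigma_K}\sum_{j\in I}\norm{R_jy_j}_H^2
  \le A_Kn^{\sigma_K}\sum_{j\in I}\norm{y_j}_H^2,
\]
where the second step uses \(\norm{R_j}\le1\). The lower bound in the same theorem, applied to \((y_j)_{j\in I}\), gives
\[
  \sum_{j\in I}\norm{y_j}_H^2\le B_Kn^{\sigma_K}\norm{w}_H^2 .
\]

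Combining the two displays yields
\[
  \norm{\mathcal R_Iw}_H^2\le A_KB_Kn^{2\sigma_K}\norm{w}_H^2
  =4K^2(1+\vartheta_K)^2n^{2\sigma_K}\norm{w}_H^2 .
\]
Taking square roots gives exactly \eqref{eq:block-multiplier}.

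There is no real obstacle here. The only points needing care are the uniqueness of the decomposition, which is what makes \(\mathcal R_I\) well defined, and the exponent bookkeeping. The product of the two constants carries \(n^{2\sigma_K}\), and the square root brings it back to \(n^{\sigma_K}\). Likewise \(\sqrt{A_KB_K}=2K(1+\vartheta_K)\) reproduces the stated prefactor exactly.
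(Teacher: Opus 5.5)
Your proposal is correct and follows the paper's proof essentially verbatim: you apply the upper square-function bound to \((R_jy_j)\), use \(\norm{R_j}\le1\), then the reverse bound for \((y_j)\), and note \(\sqrt{A_KB_K}=2K(1+\vartheta_K)\). Your extra remark that \(y_j=\Delta_jw\), which makes \(\mathcal R_I\) well defined, is a harmless addition the paper leaves implicit.
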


\begin{proof}
For \(w=\sum_{j\in I}y_j\), the two inequalities in
\Cref{thm:two-sided-square} give
\[
  \norm{\mathcal R_Iw}_H^2
  \le
  A_Kn^{\sigma_K}\sum_{j\in I}\norm{R_jy_j}_H^2
  \le
  A_Kn^{\sigma_K}\sum_{j\in I}\norm{y_j}_H^2
  \le
  A_KB_Kn^{2\sigma_K}\norm{w}_H^2.
\]
Since \(\sqrt{A_KB_K}=2KA_K=2K(1+\vartheta_K)\), the result follows.
\end{proof}

\subsection{A finite block-\texorpdfstring{\(LU\)}{LU} consequence}

We next record a finite-dimensional consequence of the square-function
estimate.  Throughout this subsection, \(\R^n\) is equipped with its Euclidean
inner product, and \(\norm{\cdot}\) denotes the induced operator norm.  Fix an
orthogonal decomposition
\[
  \R^n=H_1\oplus\cdots\oplus H_m,
\]
and let \(F_j\) be the orthogonal projection onto \(H_j\).  For
\(A\in\R^{n\times n}\), its \((i,j)\)-block is the operator
\[
  A_{ij}:=F_iA|_{H_j}:H_j\to H_i.
\]
We call \(A\) block upper triangular if \(A_{ij}=0\) whenever \(i>j\),
and block lower triangular if \(A_{ij}=0\) whenever \(i<j\).  A normalized
block-\(LU\) factorization \(M=LU\) is one in which \(L\) is block lower
triangular with \(L_{jj}=I_{H_j}\)
and \(U\) is block upper triangular.

Set
\(Q_0:=0\),
\(\mathcal H_0:=\{0\}\) and
\[
  Q_j:=\sum_{i=1}^jF_i,
  \qquad
  \mathcal H_j:=\ran Q_j
  =
  H_1\oplus\cdots\oplus H_j
  \qquad\text{for}\quad
  1\le j\le m .
\]
Note that \(Q_m=\Id\) and \(\mathcal H_m=\R^n\).
The following result recovers the finite block-\(LU\) growth result of
\citet[Theorem~18]{Feischl2022}, with an explicit exponent determined by the
ratio \(C/\gamma\).

\begin{corollary}[Finite block-\(LU\) growth]
\label{cor:finite-block-lu}
Let \(M\in\R^{n\times n}\), and for \(1\le j\le m\) define its \(j\)-th
leading principal block section by
\[
  M_j
  :=
  Q_jM|_{\mathcal H_j}
  :
  \mathcal H_j\to\mathcal H_j.
\]
Suppose that
\[
  \norm{M}\le C,
  \qquad
  \|{M_j^{-1}}\|\le\gamma^{-1} ,
\]
for all \(1\le j\le m\) and
for some \(C,\gamma>0\).  Define \(p\in(2,\infty]\) by
\begin{equation}
\label{eq:lu-exponent-p}
  \frac1p
  =
  \frac12
  \log_2\Big(
    1+\sqrt{1-\frac{\gamma^2}{C^2}}
  \Big).
\end{equation}
Here \(p=\infty\) when \(C=\gamma\).

Then \(M\) admits a unique normalized block-\(LU\) factorization
\(M=LU\),
where \(L\) is block lower triangular with identity diagonal blocks and
\(U\) is block upper triangular.  Moreover, we have
\begin{equation}
\label{eq:finite-lu-growth}
  \norm{L}
  +
  \|{L^{-1}}\|
  +
  \norm{U}
  +
  \|{U^{-1}}\|
  \le
  C_{\mathrm{LU}}(C,\gamma)\,m^{1/p},
\end{equation}
with a constant \(C_{\mathrm{LU}}(C,\gamma)\) depending only on
\(C\) and \(\gamma\).
\end{corollary}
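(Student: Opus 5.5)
Existence and uniqueness of the normalized block-\(LU\) factorization follows from invertibility of every leading principal block section \(M_j\), via the standard block Gaussian elimination/Schur complement recursion. The substance is the growth bound, and I would obtain it by building a compatible projection chain from \(M\) and applying \Cref{thm:two-sided-square}.

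\textbf{Step 1: the projection chain.} For \(1\le j\le m\) set
\[
  P_j:=M_j^{-1}Q_jM .
\]
This is the Galerkin projection onto \(\mathcal H_j\) with trial and test space \(\mathcal H_j\). It is idempotent and has range \(\mathcal H_j\). For \(j\le k\) we have \(P_jP_k=P_j\), because \(Q_jMP_kx=Q_jQ_kMP_kx=Q_jQ_kMx\) by the Galerkin property of \(P_k\). We also have \(P_kP_j=P_j\), since \(\ran P_j\subset\mathcal H_k\). Hence \eqref{eq:abstract-nest} holds, with indices shifted by one, and
\[
  \norm{P_j}\le \norm{M_j^{-1}}\,\norm{M}\le C/\gamma=:K .
\]
This gives \(\vartheta_K=\sqrt{1-\gamma^2/C^2}\) and \(\sigma_K/2=1/p\).

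\textbf{Step 2: identifying the detail spaces.} Define \(T:=\sum_j\Delta_jF_j\), where \(\Delta_j=P_j-P_{j-1}\). The detail spaces \(E_j=\ran\Delta_j\) are graphs over \(H_j\). I claim they are exactly \(E_j=(L^{-\trans})\)-type images of \(H_j\). More precisely, write \(\ker P_{j-1}=\ker(Q_{j-1}M)=M^{-1}(\mathcal H_{j-1}^\perp)\). Then \(E_j=\mathcal H_j\cap M^{-1}(\mathcal H_{j-1}^\perp)\). Combining this with \(M=LU\), the spaces \(E_j\) are the images \(U^{-1}H_j\) (block upper triangular change), and \(M E_j=L(\cdot)\) lands in \(\mathcal H_{j-1}^\perp\). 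So the columns of \(U^{-1}\) (suitably normalized blockwise) span \(E_j\), and \(L\) relates to the dual chain \(P_j^*\).

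\textbf{Step 3: bounding the factors.} Write \(U^{-1}=T D^{-1}\), where \(D\) is block diagonal with \(D_{jj}=U_{jj}\) and \(T\) is unit block upper triangular with \(TH_j=E_j\); concretely, \(Tx_j\in E_j\) with \(F_jTx_j=x_j\). Then \(T=\sum_j\Delta_jF_j\) restricted blockwise, and \(T^{-1}=\sum_jF_j\Delta_j\). By \Cref{thm:two-sided-square}, \(\norm{T}\le\sqrt{A_K}\,m^{\sigma_K/2}\cdot 2K\), using \(\norm{\Delta_jF_jx}\le 2K\norm{F_jx}\) and orthogonality of the \(F_j\). Similarly \(\norm{T^{-1}x}^2\le4K^2\sum\norm{\Delta_jx}^2\le 4K^2B_Km^{\sigma_K}\norm{x}^2\). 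The diagonal blocks \(U_{jj}\) are Schur complements, namely compressions of \(M_j^{-1}\)-inverses, so \(\norm{U_{jj}}\le C\) and \(\norm{U_{jj}^{-1}}\le\gamma^{-1}\). This is done by checking \(U_{jj}^{-1}=F_jM_j^{-1}|_{H_j}\). This bounds \(U\) and \(U^{-1}\) by \(C(K)m^{1/p}\). Applying the same argument to \(M^\trans=U^\trans L^\trans\), which has leading sections \(M_j^\trans\) with the same bounds and the adjoint chain, bounds \(L\) and \(L^{-1}\) after absorbing a diagonal factor.

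The main obstacle is Step 2/3: verifying cleanly that the triangular factor \(T\) maps \(H_j\) onto \(E_j\), and that the diagonal blocks are controlled by \(C\) and \(\gamma\). I would first verify this through the identity \(P_j=U^{-1}Q_jU\), equivalently that \(U\) conjugates the Galerkin projections into the coordinate truncations \(Q_j\), which is the structural fact noted in the introduction.
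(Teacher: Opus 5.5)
Your argument is essentially the paper's proof. It uses the same chain \(P_j=M_j^{-1}Q_jM\) with \(\norm{P_j}\le C/\gamma\), the upper square-function estimate for \(U^{-1}\), the reverse estimate for \(U\), and transposition for \(L\). Writing \(U^{-1}=TD^{-1}\) and \(U=DT^{-1}\) with \(T=\sum_j\Delta_jF_j\), \(T^{-1}=\sum_jF_j\Delta_j\) is a clean repackaging of the paper's two steps. The paper's vectors \(y_j=U^{-1}F_jx\) are \(\Delta_jD^{-1}F_jx\), and its identity \(F_jUx=U_{jj}F_j\Delta_jx\) is exactly \(U=DT^{-1}\). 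Checking \(P_jP_k=P_{\min\{j,k\}}\) directly from the Galerkin property, instead of deducing it from \(P_j=U^{-1}Q_jU\), is fine.

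There is, however, one false claim: \(\norm{U_{jj}}\le C\). The identity you verify, \(U_{jj}^{-1}=F_jM_j^{-1}|_{H_j}\), gives \(\norm{U_{jj}^{-1}}\le\gamma^{-1}\). But \(U_{jj}\) is the inverse of a compression of \(M_j^{-1}\), not a compression of \(M_j\), so \(\norm{M}\) alone does not control it. For example, take \(M=\begin{pmatrix}1&10\\10&0\end{pmatrix}\) with \(1\times1\) blocks. It satisfies the hypotheses with \(\gamma=1\) and \(C=\norm{M}=(1+\sqrt{401})/2\approx10.5\). Yet \(U=\begin{pmatrix}1&10\\0&-100\end{pmatrix}\), so \(\abs{U_{22}}=100>C\).

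The repair is the Schur complement formula you mention: \(U_{jj}=F_jMF_j-F_jMQ_{j-1}M_{j-1}^{-1}Q_{j-1}MF_j\) on \(H_j\). This gives \(\norm{U_{jj}}\le C+C^2/\gamma\), which is the paper's bound. With it, \(\norm{U}\le\norm{D}\norm{T^{-1}}\) still grows like \(m^{1/p}\), and the proof goes through. The resulting constant depends on \(C\) and \(\gamma\) separately, not only on \(K\): the scaling \(M\mapsto tM\) fixes \(K\) but scales \(U\).

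Incidentally, once \(U\) and \(U^{-1}\) are bounded, \(L=MU^{-1}\) and \(L^{-1}=UM^{-1}\) give \(\norm{L}\le C\norm{U^{-1}}\) and \(\norm{L^{-1}}\le\gamma^{-1}\norm{U}\) directly. So the transposition step is not actually needed.
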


\begin{proof}
Since \(M_m=M\), the assumptions imply \(\gamma\le C\).  
Setting
\(K={C}/{\gamma}\),
we have
\[
  \sigma_K
  =
  \log_2\bigl(1+\sqrt{1-K^{-2}}\bigr)
  =
  \frac{2}{p}.
\]
We also write
\[
  A=1+\sqrt{1-K^{-2}},
  \qquad
  B=4K^2A .
\]
The invertibility of the leading principal block sections
\(M_1,\ldots,M_m\) guarantees the existence and uniqueness of the normalized
block-\(LU\) factorization \(M=LU\).

For \(1\le j\le m\), define
\[
  P_j:=\iota_jM_j^{-1}Q_jM,
\]
where \(\iota_j:\mathcal H_j\hookrightarrow\R^n\) is the natural inclusion,
and set \(P_0:=0\).
Let \(L_j\) and \(U_j\) denote the leading
principal block sections of \(L\) and \(U\), respectively.  Since
\(M_j=L_jU_j\) we have
\begin{equation}
\label{eq:lu-projection-identity}
P_j
=
\iota_jU_j^{-1}L_j^{-1}Q_jLU
=
\iota_jU_j^{-1}Q_jU
=
U^{-1}Q_jU.
\end{equation}
Indeed, block lower triangularity gives
\(Q_jL=\iota_jL_jQ_j\),
while block upper triangularity gives
\(U^{-1}Q_j=\iota_jU_j^{-1}Q_j\).
Consequently,
\((P_j)_{j=0}^m\) is a compatible projection chain and
\[
  \norm{P_j}
  \le
  \|{M_j^{-1}}\|\norm{M}
  \le K.
\]
Its detail projections are
\begin{equation}
\label{eq:lu-details}
  \Delta_j:=P_j-P_{j-1}=U^{-1}F_jU.
\end{equation}

We first estimate \(U^{-1}\).  Since \(L_j^{-1}\) is block lower
triangular with identity diagonal blocks, its \(j\)-th block column is the
\(j\)-th block column of the identity.
This yields
\(L_j^{-1}F_j=F_j\).
Using \(M_j^{-1}=U_j^{-1}L_j^{-1}\) and
\(\iota_jU_j^{-1}F_j=U^{-1}F_j\), we obtain
\[
  \iota_jM_j^{-1}F_j
  =
  \iota_jU_j^{-1}L_j^{-1}F_j
  =
  \iota_jU_j^{-1}F_j
  =
  U^{-1}F_j.
\]
Therefore we have
\begin{equation}
\label{eq:Uinv-column}
  \norm{U^{-1}F_j}
  \le
  \norm{M_j^{-1}}
  \le
  \gamma^{-1}
  \qquad (1\le j\le m).
\end{equation}
For \(x\in\R^n\), put \(y_j:=U^{-1}F_jx\).  By \eqref{eq:lu-details} we have \(y_j\in\ran\Delta_j\), and the upper
estimate in \Cref{thm:two-sided-square} gives
\[
  \norm{U^{-1}x}^2
  =
  \Big\|\sum_{j=1}^m y_j\Big\|^2 
  \le
  A m^{2/p}\sum_{j=1}^m\norm{y_j}^2 
  \le
  A \gamma^{-2}m^{2/p}
  \sum_{j=1}^m\norm{F_jx}^2 
  =
  A \gamma^{-2}m^{2/p}\norm{x}^2.
\]
We thus infer
\begin{equation}
\label{eq:Uinv-growth}
  \norm{U^{-1}}
  \le
  \sqrt{A}\,\gamma^{-1}m^{1/p}.
\end{equation}

We next estimate \(U\).  
For \(j\ge2\), consider the block decomposition
\(\mathcal H_j=\mathcal H_{j-1}\oplus H_j\).
Relative to this, we have
\[
  M_j
  =
  \begin{pmatrix}
    M_{j-1} & B_j\\
    C_j & D_j
  \end{pmatrix},
\]
where
\[
  B_j=Q_{j-1}MF_j|_{H_j},
  \qquad
  C_j=F_jMQ_{j-1}|_{\mathcal H_{j-1}},
  \qquad
  D_j=F_jMF_j|_{H_j}.
\]
Then the \(j\)-th diagonal block of \(U\)
is the Schur complement of \(M_{j-1}\):
\[
  U_{jj}
  =
  D_j-C_jM_{j-1}^{-1}B_j,
\]
which yields
\begin{equation}
\label{eq:Udiag-bound}
  \norm{U_{jj}}
  \le
  C+\frac{C^2}{\gamma}
  =: \hat C.
\end{equation}
For \(j=1\), the same bound follows from
\(U_{11}=F_1MF_1|_{H_1}\).
For \(x\in\R^n\), set \(y_j:=\Delta_jx\).  
Thus we have \(x=\sum_{j=1}^m y_j\) and
\(Uy_j=F_jUx\).
Since the \(j\)-th diagonal block of \(U^{-1}\) is \(U_{jj}^{-1}\), one has
\[
  F_jy_j=U_{jj}^{-1}F_jUx ,
\]
and hence
\[
  \norm{F_jUx}
  =
  \norm{U_{jj}F_jy_j}
  \le
  \hat C \norm{y_j}.
\]
Using the reverse estimate in \Cref{thm:two-sided-square}, we obtain
\[
  \norm{Ux}^2
  =
  \sum_{j=1}^m\norm{F_jUx}^2 
  \le
  \hat C^2\sum_{j=1}^m\norm{y_j}^2 
  \le
  \hat C^2Bm^{2/p}
  \norm{\sum_{j=1}^m y_j}^2 
  =
  \hat C^2Bm^{2/p}\norm{x}^2 ,
\]
yielding
\begin{equation}
\label{eq:U-growth}
  \norm{U}
  \le
  \hat C\sqrt{B}\,m^{1/p}.
\end{equation}

It remains to estimate \(L\) and \(L^{-1}\).  
Let
\[
  D:=\sum_{j=1}^mF_jUF_j
  =
  \operatorname{diag}(U_{11},\ldots,U_{mm})
\]
be the block diagonal of \(U\).
By \eqref{eq:Udiag-bound}, we have \(\norm{D}\le\hat C\).  
Since the diagonal blocks of \(U^{-1}\) are
\(U_{jj}^{-1}\), \eqref{eq:Uinv-column} also gives
\(\norm{D^{-1}}\le\gamma^{-1}\).

The normalized block-\(LU\) factorization of \(M^\trans\), relative to the same
block decomposition, is
\[
  M^\trans
  =
  \tilde L\tilde U,
  \qquad
  \tilde L=U^\trans D^{-\trans},
  \qquad
  \tilde U=D^\trans L^\trans.
\]
The leading
principal block sections of \(M^\trans\) are \(M_j^\trans\), so \(M^\trans\) satisfies the
same hypotheses with the same constants \(C\) and \(\gamma\).
Applying \eqref{eq:Uinv-growth} and \eqref{eq:U-growth} to \(M^\trans\) gives
\[
  \norm{\tilde U}
  +
  \norm{\tilde U^{-1}}
  \le
  C_1(C,\gamma)m^{1/p}.
\]
Since
\(L^\trans=D^{-\trans}\tilde U\) and \(L^{-\trans}=\tilde U^{-1}D^\trans\),
the \(m\)-independent bounds for \(D\) and \(D^{-1}\) imply
\[
  \norm{L}+\norm{L^{-1}}
  \le
  C_2(C,\gamma)m^{1/p}.
\]
Together with \eqref{eq:Uinv-growth} and \eqref{eq:U-growth}, this proves
\eqref{eq:finite-lu-growth}.
\end{proof}

\section{Relaxed quasi-orthogonality for inf-sup stable Galerkin methods}
\label{sec:relaxed-gqo}

We now return to the Petrov--Galerkin setting.  The purpose of the first
subsection is to show that nested, uniformly inf-sup stable Galerkin
discretizations produce precisely the compatible, uniformly bounded projection
chains considered in \Cref{sec:projection-geometry}.  The abstract
square-function estimate can then be applied directly to the Galerkin
increments.

\subsection{Nested Petrov--Galerkin projections}
\label{sec:galerkin-setting}

Let $X$ and $Y$ be real Hilbert spaces.  Let
$a:X\times Y\to\R$ be a bounded bilinear form with
\begin{equation}\label{eq:continuity}
 \abs{a(x,y)}\le C_a\norm{x}_X\norm{y}_Y
 \qquad(x\in X,\ y\in Y).
\end{equation}
We assume that the continuous variational problem is well posed in the usual
Babu\v{s}ka--Ne\v{c}as sense.  Only the discrete stability stated below will
enter the quasi-orthogonality proof.

Let
\[
 X_0\subset X_1\subset\cdots\subset X,
 \qquad
 Y_0\subset Y_1\subset\cdots\subset Y
\]
be finite-dimensional nested spaces with $\dim X_\ell=\dim Y_\ell$.  Assume the
uniform discrete inf-sup condition
\begin{equation}\label{eq:discrete-infsup}
 \inf_{0\ne x_\ell\in X_\ell}
 \sup_{0\ne y_\ell\in Y_\ell}
 \frac{\abs{a(x_\ell,y_\ell)}}
 {\norm{x_\ell}_X\norm{y_\ell}_Y}
 \ge\gamma>0
 \qquad(\ell\ge0).
\end{equation}
Define the Galerkin projection $G_\ell:X\to X_\ell$ by
\begin{equation}\label{eq:galerkin-projector}
 a(G_\ell x,y_\ell)=a(x,y_\ell)
 \qquad(y_\ell\in Y_\ell).
\end{equation}
The equality of dimensions and \eqref{eq:discrete-infsup} ensure existence and
uniqueness.

\begin{lemma}[Compatibility and boundedness of the Galerkin projections]
\label{lem:galerkin-nest}
The operators \(G_\ell\in\cL(X)\) are projections onto \(X_\ell\) and satisfy
\begin{equation}
\label{eq:nest-algebra}
  G_jG_k=G_{\min\{j,k\}}
  \qquad (j,k\ge0).
\end{equation}
Moreover, we have
\begin{equation}
\label{eq:G-bound}
  \norm{G_\ell}_{\cL(X)}
  \le \frac{C_a}{\gamma}
  \qquad (\ell\ge0).
\end{equation}
\end{lemma}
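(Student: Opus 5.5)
The proof has three parts: well-definedness and linearity of \(G_\ell\), the projection and compatibility identities, and the norm bound. All three come directly from the defining relation \eqref{eq:galerkin-projector} together with uniqueness.

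\emph{Well-definedness.} Fix \(\ell\) and choose bases of \(X_\ell\) and \(Y_\ell\). Problem \eqref{eq:galerkin-projector} is then a square linear system, since \(\dim X_\ell=\dim Y_\ell\). Condition \eqref{eq:discrete-infsup} shows that the homogeneous problem has only the trivial solution. Indeed, if \(a(x_\ell,y_\ell)=0\) for all \(y_\ell\in Y_\ell\), then the supremum vanishes, which forces \(x_\ell=0\). Hence the system is uniquely solvable for every right-hand side \(a(x,\cdot)|_{Y_\ell}\), and uniqueness makes \(G_\ell\) linear.

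\emph{Projection and compatibility.} If \(x\in X_\ell\), then \(x\) itself solves \eqref{eq:galerkin-projector}, so \(G_\ell x=x\). Thus \(G_\ell\) is a projection onto \(X_\ell\). For \eqref{eq:nest-algebra}, let \(m=\min\{j,k\}\).

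If \(j\le k\), we need \(G_jG_k=G_j\). For \(y\in Y_j\subset Y_k\) we have
\[
 a(G_jG_kx,y)=a(G_kx,y)=a(x,y),
\]
so by uniqueness \(G_jG_kx=G_jx\).

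If \(j>k\), then \(G_kx\in X_k\subset X_j\). Since \(G_j\) restricts to the identity on \(X_j\), we get \(G_jG_kx=G_kx\).

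\emph{Norm bound.} This is the only place where constants enter. Given \(x\in X\), apply \eqref{eq:discrete-infsup} to \(x_\ell:=G_\ell x\in X_\ell\):
\[
 \gamma\norm{G_\ell x}_X\le\sup_{0\ne y_\ell\in Y_\ell}\frac{\abs{a(G_\ell x,y_\ell)}}{\norm{y_\ell}_Y}
 =\sup_{0\ne y_\ell\in Y_\ell}\frac{\abs{a(x,y_\ell)}}{\norm{y_\ell}_Y}\le C_a\norm{x}_X,
\]
where the last step uses \eqref{eq:continuity}. This gives \eqref{eq:G-bound}, and in particular \(G_\ell\in\cL(X)\).

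None of these steps is a real obstacle. The only point needing care is to use the inf-sup condition on the discrete element \(G_\ell x\), rather than on \(x\). This avoids the naive bound \(1+C_a/\gamma\) that one would obtain from estimating \(x-G_\ell x\), and yields the sharp \(C_a/\gamma\) without any further argument.
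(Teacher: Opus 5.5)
Your proposal is correct and follows essentially the same argument as the paper. The projection property comes from uniqueness on \(X_\ell\). Compatibility is handled by the two cases: \(G_j\) is the identity on the larger space, and for the smaller Galerkin problem you use uniqueness together with \(Y_j\subset Y_k\). The bound \(C_a/\gamma\) comes from applying the discrete inf-sup condition to \(G_\ell x\). Your explicit well-posedness argument via the square system is something the paper only asserts in the sentence preceding the lemma.
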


\begin{proof}
Let \(x_\ell\in X_\ell\).  
By the defining relation \eqref{eq:galerkin-projector}, we have
\[
  a(G_\ell x_\ell-x_\ell,y_\ell)=0
  \qquad (y_\ell\in Y_\ell).
\]
Since \(G_\ell x_\ell-x_\ell\in X_\ell\), the discrete inf-sup condition
\eqref{eq:discrete-infsup} implies \(G_\ell x_\ell=x_\ell\).  Hence
\(G_\ell\) is a projection onto \(X_\ell\).

Now let \(j\le k\) and \(x\in X\).  Since \(G_jx\in X_j\subset X_k\) and \(G_k\) acts as
the identity on \(X_k\), we have
\(G_kG_jx=G_jx\).
Conversely, for every \(y_j\in Y_j\subset Y_k\), one has
\[
  a(G_jG_kx,y_j)
  =a(G_kx,y_j)
  =a(x,y_j).
\]
Thus \(G_jG_kx\in X_j\) satisfies the defining Galerkin equations for
\(G_jx\).  Uniqueness on \(X_j\), again following from the discrete inf-sup
condition, gives \(G_jG_kx=G_jx\).  This proves
\eqref{eq:nest-algebra}.

Finally, for any \(x\in X\), the discrete inf-sup condition and
\eqref{eq:galerkin-projector} yield
\[
  \gamma\norm{G_\ell x}_X
  \le
  \sup_{0\ne y_\ell\in Y_\ell}
  \frac{\abs{a(G_\ell x,y_\ell)}}{\norm{y_\ell}_Y}
  =
  \sup_{0\ne y_\ell\in Y_\ell}
  \frac{\abs{a(x,y_\ell)}}{\norm{y_\ell}_Y}
  \le
  C_a\norm{x}_X.
\]
Taking the supremum over \(x\ne0\) proves \eqref{eq:G-bound}.
\end{proof}

Thus \((G_\ell)_{\ell\ge0}\) is a compatible projection chain with the admissible uniform projection bound
\(K={C_a}/{\gamma}\).
It therefore satisfies the hypotheses of
\Cref{sec:projection-geometry}.

\subsection{Quantitative relaxed quasi-orthogonality}
\label{sec:quantitative-relaxed-gqo}

We now apply the two-sided square-function estimate to the Galerkin projection
chain.  To formulate the result uniformly over the starting level and the
target, let
\[
  \cG:=(G_\ell)_{\ell\ge0}
\]
and introduce the following constants.

\begin{definition}[Finite-window and full GQO constants]
For \(N\ge1\), define
\begin{equation}
\label{eq:C-window}
  \mathfrak C_{\cG}(N):=
  \sup_{\ell\ge0}
  \sup_{\substack{x\in X\\(I-G_\ell)x\ne0}}
  \frac{
    \displaystyle
    \sum_{k=\ell}^{\ell+N-1}
    \norm{(G_{k+1}-G_k)x}_X^2
  }{
    \norm{(I-G_\ell)x}_X^2
  }.
\end{equation}
The full GQO constant is
\begin{equation}
\label{eq:C-full}
  \mathfrak C_{\cG}(\infty)
  :=
  \sup_{N\ge1}\mathfrak C_{\cG}(N)
  \in[0,\infty].
\end{equation}
\end{definition}

Relaxed GQO requires
\(\mathfrak C_{\cG}(N)=o(N)\),
whereas full GQO requires
\(\mathfrak C_{\cG}(\infty)<\infty\).

\begin{theorem}[Quantitative relaxed GQO]
\label{thm:main-gqo}
Assume \eqref{eq:continuity} and \eqref{eq:discrete-infsup}, and let
\((G_\ell)_{\ell\ge0}\) be the associated Galerkin projections.  Set
\begin{equation}
\label{eq:galerkin-theta-sigma}
  K=\frac{C_a}{\gamma},
  \qquad
  \vartheta_K=\sqrt{1-K^{-2}},
  \qquad
  \sigma_K=\log_2(1+\vartheta_K)<1.
\end{equation}
For \(u\in X\) and \(\ell\ge0\), define \(u_\ell:=G_\ell u\).  
Then, for every \(\ell\ge0\) and \(N\ge1\), we have
\begin{equation}
\label{eq:main-gqo}
  \sum_{k=\ell}^{\ell+N-1}
  \norm{u_{k+1}-u_k}_X^2
  \le
  C_{\qo}(K)N^{\sigma_K}\norm{u-u_\ell}_X^2,
\end{equation}
where \(C_{\qo}(K)=4K^2(1+\vartheta_K)\).
Equivalently, the finite-window GQO constants satisfy
\begin{equation}
\label{eq:C-function-bound}
  \mathfrak C_{\cG}(N)
  \le
  4K^2\bigl(1+\sqrt{1-K^{-2}}\bigr)
  N^{\log_2(1+\sqrt{1-K^{-2}})} =o(N) .
\end{equation}
\end{theorem}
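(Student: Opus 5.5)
We apply the reverse (lower) square-function estimate of \Cref{thm:two-sided-square} to the Galerkin increments over the window. The chain \((G_\ell)_{\ell\ge0}\) is a compatible projection chain on \(H=X\) with bound \(K=C_a/\gamma\), by \Cref{lem:galerkin-nest}. Fix \(\ell\ge0\) and \(N\ge1\), and write \(\Delta_j=G_j-G_{j-1}\) for the detail projections. Since the estimates of \Cref{sec:projection-geometry} depend on the chain only through a bound on \(\sup_j\norm{G_j}\), the admissible value \(K=C_a/\gamma\) may be used even if the true supremum is smaller. The constants \(\vartheta_K\), \(\sigma_K\), \(A_K\) and \(B_K\) are increasing in \(K\), so replacing the actual supremum by \(C_a/\gamma\) only weakens the bound. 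I will note this explicitly. The case \(C_a/\gamma<1\) cannot occur, since \(G_\ell\ne0\) forces \(\norm{G_\ell}\ge1\).

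The increments are \(u_{k+1}-u_k=\Delta_{k+1}u=:y_{k+1}\in E_{k+1}\), for \(k=\ell,\dots,\ell+N-1\). The corresponding indices \(j=\ell+1,\dots,\ell+N\) form a consecutive interval \(I\) of length \(n=N\). By telescoping,
\[
  \sum_{j\in I}y_j=(G_{\ell+N}-G_\ell)u=G_{\ell+N}(u-u_\ell),
\]
where the last equality uses \(G_{\ell+N}G_\ell=G_\ell\) from \eqref{eq:nest-algebra}. The first inequality in \eqref{eq:two-sided-square} then gives
\[
  \sum_{k=\ell}^{\ell+N-1}\norm{u_{k+1}-u_k}_X^2\le B_KN^{\sigma_K}\norm{G_{\ell+N}(u-u_\ell)}_X^2 .
\]

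The main point is to remove the extra factor coming from \(G_{\ell+N}\). The crude bound \(\norm{G_{\ell+N}(u-u_\ell)}\le K\norm{u-u_\ell}\) would produce \(4K^4(1+\vartheta_K)\), not the claimed \(4K^2(1+\vartheta_K)\). To avoid this, I would rerun the duality argument from the proof of \Cref{thm:two-sided-square} with \(w:=u-u_\ell\) in place of the partial sum. Put \(z_j=\Delta_j^*y_j\) and \(z=\sum_{j\in I}z_j\). For \(j\in I\) we have \(\Delta_j w=\Delta_j u-\Delta_jG_\ell u=y_j\), because \(\Delta_jG_\ell=0\) for \(j>\ell\). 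Hence
\[
  \ip{w}{z}_X=\sum_{j\in I}\ip{\Delta_jw}{y_j}_X=\sum_{j\in I}\norm{y_j}_X^2,
\]
and this holds without passing through \(G_{\ell+N}\). Combining it with the upper estimate for the adjoint chain, \(\norm{z}^2\le 4K^2A_KN^{\sigma_K}\sum_{j\in I}\norm{y_j}^2\), and with Cauchy--Schwarz exactly as before, gives
\[
  \sum_{j\in I}\norm{y_j}_X^2\le 4K^2(1+\vartheta_K)N^{\sigma_K}\norm{u-u_\ell}_X^2 ,
\]
which is \eqref{eq:main-gqo}. If \((I-G_\ell)u=0\), the left-hand side vanishes because \(y_j=\Delta_j w=0\), so the bound also holds in that case.

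Finally, \eqref{eq:C-function-bound} follows by taking the suprema over \(u\) and \(\ell\) in the definition \eqref{eq:C-window}. The relation \(o(N)\) holds because \(\sigma_K<1\), which in turn follows from \(\vartheta_K<1\).
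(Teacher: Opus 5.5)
Your proof is correct and is essentially the paper's argument. The paper also uses $\Delta_j(u-u_\ell)=\Delta_j u$ for $j>\ell$, and it bounds the map $x\mapsto(\Delta_jx)_{j\in I}$ through its adjoint $z\mapsto\sum_{j\in I}\Delta_j^*z_j$, using the upper estimate for the adjoint chain together with $\norm{\Delta_j^*}\le 2K$. That is exactly your biorthogonality-plus-Cauchy--Schwarz step with $w=u-u_\ell$, and your detour through $G_{\ell+N}(u-u_\ell)$ is harmless (as you noted, it alone would lose a factor $K^2$) since your final argument gives the stated constant $4K^2(1+\vartheta_K)$.
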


\begin{proof}
Let \(e_\ell:=(I-G_\ell)u=u-u_\ell\).  
Then for every \(j>\ell\), 
compatibility gives
\[
  \Delta_jG_\ell
  =
  (G_j-G_{j-1})G_\ell
  =
  0,
\]
and hence \(\Delta_je_\ell=\Delta_ju=u_j-u_{j-1}\),
where \(\Delta_j=G_j-G_{j-1}\).
This gives
\[
  \sum_{k=\ell}^{\ell+N-1}
  \norm{u_{k+1}-u_k}_X^2
  =
  \sum_{j\in I}\norm{\Delta_je_\ell}_X^2
  =
  \norm{\mathcal D_Ie_\ell}_{\ell^2(I;X)}^2 ,
\]
where \(I=\{\ell+1,\ldots,\ell+N\}\) and
\[
  \mathcal D_I:X\to\ell^2(I;X),
  \qquad
  \mathcal D_Ix:=(\Delta_jx)_{j\in I}.
\]
We will bound the norm of \(\mathcal D_I\) through its adjoint
\[
  \mathcal D_I^*:\ell^2(I;X)\rightarrow X,
  \qquad
  \mathcal D_I^*(z_j)_{j\in I}
  =
  \sum_{j\in I}\Delta_j^*z_j.
\]

The adjoint projections \((G_j^*)_{j\ge0}\) form a compatible projection
chain with the same uniform bound \(K\), and \(\Delta_j^*\) are its detail
projections.  Hence the upper estimate in
\Cref{thm:two-sided-square} gives
\[
  \norm{\mathcal D_I^*z}_X^2
  =
  \norm{\sum_{j\in I}\Delta_j^*z_j}_X^2 
  \le
  (1+\vartheta_K)N^{\sigma_K}
  \sum_{j\in I}\norm{\Delta_j^*z_j}_X^2.
\]
Since \(\norm{\Delta_j}\le\norm{G_j}+\norm{G_{j-1}}\le2K\), it follows that
\[
  \norm{\mathcal D_I^*z}_{X}^2
  \le
  4K^2(1+\vartheta_K)N^{\sigma_K}
  \sum_{j\in I}\norm{z_j}_X^2 ,
\]
establishing
\[
  \norm{\mathcal D_I}^2
  =
  \norm{\mathcal D_I^*}^2
  \le
  4K^2(1+\vartheta_K)N^{\sigma_K} . \qedhere
\]
\end{proof}

\begin{remark}[The orthogonal endpoint]
If \(K=1\), then every \(G_\ell\) is an orthogonal projection.  Hence the
Galerkin increments are mutually orthogonal, and
\[
  \sum_{k=\ell}^{\ell+N-1}
  \norm{u_{k+1}-u_k}_X^2
  =
  \norm{(G_{\ell+N}-G_\ell)u}_X^2
  \le
  \norm{u-u_\ell}_X^2.
\]
Thus the sharp finite-window GQO constant is at most \(1\).  
The explicit constant \(C_{\qo}(K)\) in \Cref{thm:main-gqo} is therefore not sharp at \(K=1\).
\end{remark}

\begin{corollary}[Uniformity over targets and adaptive paths]
\label{cor:uniform-family}
Consider any family of nested Galerkin problems whose continuity constants
and discrete inf-sup constants satisfy
\[
  C_a\le C_0,
  \qquad
  \gamma\ge\gamma_0>0.
\]
Set
\(K_0={C_0}/{\gamma_0},\) and \(\sigma_0=\log_2\big(1+\sqrt{1-K_0^{-2}}\big)<1\).
Then every problem in the family, for every target \(u\), satisfies
\[
  \sum_{k=\ell}^{\ell+N-1}
  \norm{u_{k+1}-u_k}_X^2
  \le
  C_{\qo}(K_0)N^{\sigma_0}
  \norm{u-u_\ell}_X^2
\]
for all \(\ell\ge0\) and \(N\ge1\).
\end{corollary}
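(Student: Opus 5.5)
The plan is to reduce the corollary to \Cref{thm:main-gqo} applied problem by problem, and then use monotonicity of the bound in \(K\) to replace each problem's own constants by the family constants \(C_0,\gamma_0\).

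First, fix any member of the family, with constants \(C_a\le C_0\) and \(\gamma\ge\gamma_0\), and fix a target \(u\). By \Cref{lem:galerkin-nest}, its Galerkin projections form a compatible projection chain with \(\sup_\ell\norm{G_\ell}\le C_a/\gamma\le C_0/\gamma_0=K_0\). The key observation is that the constant \(K\) in \Cref{sec:projection-geometry} only needs to be an upper bound for the projection norms. Every proof there (\Cref{lem:block-angle}, \Cref{thm:upper-square}, \Cref{thm:two-sided-square} and the proof of \Cref{thm:main-gqo}) uses only \(\norm{P_j}\le K\), so each argument goes through unchanged with \(K\) replaced by any admissible bound \(K'\ge\sup_j\norm{P_j}\). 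For example, in \Cref{lem:block-angle} the inequality \(\norm{u}\le K'\norm{u-tv}\) holds, which yields \(\vartheta_{K'}\) in place of \(\vartheta_K\). Choosing \(K'=K_0\) (which is \(\ge1\)) gives
\[
\sum_{k=\ell}^{\ell+N-1}\norm{u_{k+1}-u_k}_X^2\le 4K_0^2\bigl(1+\vartheta_{K_0}\bigr)N^{\sigma_{K_0}}\norm{u-u_\ell}_X^2 ,
\]
and this is the claimed estimate, since \(\sigma_{K_0}=\sigma_0\) and \(C_{\qo}(K_0)=4K_0^2(1+\vartheta_{K_0})\).

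As a cross-check, one can instead apply \Cref{thm:main-gqo} with the problem's own \(K=C_a/\gamma\le K_0\) and then compare the two bounds. The map \(K\mapsto\vartheta_K=\sqrt{1-K^{-2}}\) is nondecreasing on \([1,\infty)\), so \(\sigma_K\) and \(C_{\qo}(K)=4K^2(1+\vartheta_K)\) are nondecreasing as well. Since \(N\ge1\), the power \(N^{\sigma}\) is nondecreasing in \(\sigma\). Hence \(C_{\qo}(K)N^{\sigma_K}\le C_{\qo}(K_0)N^{\sigma_0}\). Both bounds are independent of \(u\), \(\ell\), \(N\) and the particular nested sequence, which gives the asserted uniformity.

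The only step needing any care is the degenerate case \(C_a/\gamma<1\), which would make \(\vartheta_K\) undefined. This cannot actually occur, because \(G_\ell\) is a nonzero projection and so \(\norm{G_\ell}\ge1\). In any case the first argument sidesteps the issue entirely, since it works directly with the admissible bound \(K_0\ge1\).
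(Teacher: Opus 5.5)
Your proof is correct, and your ``cross-check'' is exactly the paper's argument: apply \Cref{thm:main-gqo} with \(K=C_a/\gamma\le K_0\), then use that \(\vartheta_K\), \(\sigma_K\) and \(C_{\qo}(K)\) are increasing in \(K\). Your remark that \(N\ge1\) makes \(N^{\sigma}\) nondecreasing in \(\sigma\) fills in a detail the paper leaves implicit. Your primary route, rerunning \Cref{sec:projection-geometry} with the admissible bound \(K_0\) in place of \(\sup_j\norm{P_j}\), is an equally valid minor variant; the paper already relies on it implicitly when it uses \(K=C_a/\gamma\), rather than the exact supremum of the projection norms, in \Cref{thm:main-gqo}.
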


\begin{proof}
For each member of the family, we have
\[
  K=\frac{C_a}{\gamma}\le\frac{C_0}{\gamma_0}=K_0.
\]
Both \(\vartheta_K\) and \(\sigma_K\), as well as the prefactor
\(C_{\qo}(K)=4K^2(1+\vartheta_K)\), are increasing functions of \(K\).
The claim therefore follows from \Cref{thm:main-gqo}.
\end{proof}

Uniformity with respect to the target and the adaptive path is already
contained in the abstract result of \citet{Feischl2022}.  The additional
information here is the explicit exponent \(\sigma_0\) and prefactor
\(C_{\qo}(K_0)\) obtained directly from the common projection bound.

\section{Consequences for adaptive methods}
\label{sec:afem}

We record the standard adaptive consequence of the finite-window estimate.
Let \(\eta_\ell\ge0\) be an estimator associated with the Galerkin solution
\(u_\ell\), and assume estimator reduction, reliability, and
quasi-monotonicity:
\begin{align}
  \eta_{\ell+1}^2
  &\le
  \kappa\eta_\ell^2
  +C_{\mathrm{est}}
  \norm{u_{\ell+1}-u_\ell}_X^2,
  &&0<\kappa<1,
  \label{eq:est-reduction}\\
  \norm{u-u_\ell}_X
  &\le
  C_{\mathrm{rel}}\eta_\ell,
  \label{eq:reliability}\\
  \eta_{\ell+j}^2
  &\le
  C_{\mathrm{mon}}\eta_\ell^2,
  &&\ell,j\ge0.
  \label{eq:quasi-monotone}
\end{align}
In the standard conforming axiomatic setting, these properties follow from
stability, reduction, discrete reliability, and the usual limiting
approximation argument; see \citet{CFPP2014,Feischl2022}.

\begin{proposition}[Linear estimator convergence]
\label{prop:linear-estimator}
Assume \eqref{eq:continuity}, \eqref{eq:discrete-infsup}, and
\eqref{eq:est-reduction}--\eqref{eq:quasi-monotone}.  Then there exist
\(C_{\mathrm{lin}}\ge1\) and \(0<q_{\mathrm{lin}}<1\) such that
\[
  \eta_{\ell+n}^2
  \le
  C_{\mathrm{lin}}q_{\mathrm{lin}}^n\eta_\ell^2
  \qquad(\ell,n\ge0).
\]
The constants depend only on the estimator constants and the continuity
and inf-sup bounds.
\end{proposition}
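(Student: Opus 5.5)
Iterating \eqref{eq:est-reduction} over a window and controlling the accumulated increments by \Cref{thm:main-gqo}, this is the standard Feischl-type argument. Fix \(\ell\ge0\) and a window length \(N\ge1\), to be chosen below. Unrolling the estimator reduction from level \(\ell\) gives
\[
  \eta_{\ell+N}^2\le\kappa^N\eta_\ell^2+C_{\mathrm{est}}\sum_{k=\ell}^{\ell+N-1}\kappa^{\ell+N-1-k}\norm{u_{k+1}-u_k}_X^2 .
\]
The weights \(\kappa^{\ell+N-1-k}\le1\) are harmless, so the right-hand sum is at most \(\sum_{k=\ell}^{\ell+N-1}\norm{u_{k+1}-u_k}_X^2\). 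By \Cref{thm:main-gqo} this is bounded by \(C_{\qo}(K)N^{\sigma_K}\norm{u-u_\ell}_X^2\), and reliability \eqref{eq:reliability} then gives \(\le C_{\qo}(K)C_{\mathrm{rel}}^2N^{\sigma_K}\eta_\ell^2\). This only yields \(\eta_{\ell+N}^2\lesssim N^{\sigma_K}\eta_\ell^2\), which is growth rather than decay. It is also the main obstacle: the sublinear bound must be traded against geometric reduction, and a single window does not do this.

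To overcome it, I would average over the window, following \citet{Feischl2022}. Sum the one-step reduction \eqref{eq:est-reduction} over \(k=\ell,\dots,\ell+N-1\):
\[
  \sum_{k=\ell+1}^{\ell+N}\eta_k^2\le\kappa\sum_{k=\ell}^{\ell+N-1}\eta_k^2+C_{\mathrm{est}}C_{\qo}C_{\mathrm{rel}}^2N^{\sigma_K}\eta_\ell^2 .
\]
Rearranging gives
\[
  (1-\kappa)\sum_{k=\ell+1}^{\ell+N}\eta_k^2\le \kappa\eta_\ell^2+C'N^{\sigma_K}\eta_\ell^2 .
\]
Quasi-monotonicity \eqref{eq:quasi-monotone} gives \(\eta_{\ell+N}^2\le C_{\mathrm{mon}}\eta_k^2\) for every \(k\le\ell+N\), so the left side is at least \((1-\kappa)N C_{\mathrm{mon}}^{-1}\eta_{\ell+N}^2\). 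Hence
\[
  \eta_{\ell+N}^2\le \frac{C_{\mathrm{mon}}(\kappa+C'N^{\sigma_K})}{(1-\kappa)N}\,\eta_\ell^2 .
\]
Because \(\sigma_K<1\), the factor behaves like \(N^{\sigma_K-1}\to0\). I therefore fix \(N=N_0\), depending only on \(\kappa,C_{\mathrm{est}},C_{\mathrm{rel}},C_{\mathrm{mon}},K\), so that this factor is at most \(1/2\).

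With \(N_0\) fixed, iterate the window estimate: \(\eta_{\ell+jN_0}^2\le2^{-j}\eta_\ell^2\). For general \(n=jN_0+r\) with \(0\le r<N_0\), apply quasi-monotonicity once more to obtain
\[
  \eta_{\ell+n}^2\le C_{\mathrm{mon}}2^{-j}\eta_\ell^2\le 2C_{\mathrm{mon}}\,(2^{-1/N_0})^n\eta_\ell^2 .
\]
This gives \(q_{\mathrm{lin}}=2^{-1/N_0}\) and \(C_{\mathrm{lin}}=2C_{\mathrm{mon}}\), and both depend only on the stated constants through \(K=C_a/\gamma\). The one point to verify carefully is that \(C'\) is uniform in \(\ell\); this holds because \Cref{thm:main-gqo} is uniform in the starting level and the target.
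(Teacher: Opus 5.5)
Your proof is correct. At the top level it follows the paper: \Cref{thm:main-gqo} together with reliability gives the window bound $\sum_{k=\ell}^{\ell+N-1}\norm{u_{k+1}-u_k}_X^2\le C_{\qo}(K)C_{\mathrm{rel}}^2N^{\sigma_K}\eta_\ell^2$, uniformly in $\ell$, and $\sigma_K<1$ is then traded against estimator reduction. The difference is that the paper does not carry out this second step. It inserts $C(N)=C_{\qo}(K)N^{\sigma_K}$ into the abstract Lemmas 5--6 and Remark 7 of \citet{Feischl2022} and cites the conclusion. You prove the step inline instead. Summing \eqref{eq:est-reduction} over the window gives $(1-\kappa)\sum_{k=\ell+1}^{\ell+N}\eta_k^2\le(\kappa+C'N^{\sigma_K})\eta_\ell^2$. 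Quasi-monotonicity bounds the left side below by $(1-\kappa)NC_{\mathrm{mon}}^{-1}\eta_{\ell+N}^2$, and a fixed window length $N_0$ then gives a contraction by $1/2$, which you iterate. The citation keeps the paper's proof short and ties it directly to the framework reused in \Cref{cor:afem-optimal}. Your version is self-contained and shows exactly where $\sigma_K<1$, quasi-monotonicity, and the uniformity of \Cref{thm:main-gqo} in $\ell$ enter. It also produces explicit constants: $q_{\mathrm{lin}}=2^{-1/N_0}$ and $C_{\mathrm{lin}}=2C_{\mathrm{mon}}$, with $N_0$ determined by $\kappa$, $C_{\mathrm{est}}$, $C_{\mathrm{rel}}$, $C_{\mathrm{mon}}$ and $K$. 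Two cosmetic points. First, the initial unrolled estimate is never used and can be dropped. Second, $C_{\mathrm{lin}}\ge1$ requires $C_{\mathrm{mon}}\ge1/2$. This holds automatically, since taking $j=0$ in \eqref{eq:quasi-monotone} forces $C_{\mathrm{mon}}\ge1$ unless every $\eta_\ell$ vanishes, in which case there is nothing to prove. Alternatively, simply take $C_{\mathrm{lin}}=\max\{1,2C_{\mathrm{mon}}\}$.
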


\begin{proof}
By \Cref{thm:main-gqo}, the finite-window GQO bound may be taken as
\[
  C(N)=C_{\qo}(K)N^{\sigma_K},
  \qquad \sigma_K<1.
\]
Thus \citet[Remark~7]{Feischl2022} verifies the hypothesis of
\citet[Lemma~6]{Feischl2022}; together with
\citet[Lemma~5]{Feischl2022}, this proves the claim.
\end{proof}

\begin{corollary}[Rate-optimal AFEM]
\label{cor:afem-optimal}
Assume, in addition, the estimator, marking, and refinement hypotheses of
\citet[Theorem~3]{Feischl2022}, including assumptions
\emph{(A1)}, \emph{(A2)}, and \emph{(A4)} and the required restriction on
the D\"orfler marking parameter.  Then the adaptive algorithm converges
with the optimal algebraic rates permitted by its approximation class.
The optimality multiplier depends only on the structural continuity,
inf-sup, estimator, marking, and refinement constants, and not on the
target solution.
\end{corollary}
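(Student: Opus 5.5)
The plan is to reduce the corollary to the abstract rate-optimality theorem of \citet[Theorem~3]{Feischl2022}. That theorem takes the axioms (A1), (A2), (A4) and a relaxed quasi-orthogonality hypothesis (A3), in the form of a finite-window bound with sublinear growth, and concludes rate-optimal convergence. Its optimality multiplier depends only on the axiom constants and on the function \(N\mapsto C(N)\) in (A3). So the only thing to supply is (A3), with a constant that is structural.

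First, I would check that the adaptive Galerkin iterates fit the setting of \Cref{sec:galerkin-setting}. The meshes generated by the algorithm yield nested trial and test spaces \(X_\ell\subset X_{\ell+1}\) and \(Y_\ell\subset Y_{\ell+1}\). The uniform discrete inf-sup condition \eqref{eq:discrete-infsup} holds with constant \(\gamma\) along every refinement path, so \(u_\ell=G_\ell u\). Then \Cref{lem:galerkin-nest} gives a compatible projection chain with bound \(K=C_a/\gamma\).

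Second, \Cref{thm:main-gqo} gives
\[
\sum_{k=\ell}^{\ell+N-1}\norm{u_{k+1}-u_k}_X^2\le C_{\qo}(K)N^{\sigma_K}\norm{u-u_\ell}_X^2,
\]
with \(\sigma_K<1\). Feischl's (A3) is phrased with the estimator or error at level \(\ell\) on the right-hand side. Reliability \eqref{eq:reliability} converts \(\norm{u-u_\ell}_X\) into \(C_{\mathrm{rel}}\eta_\ell\), so (A3) holds with \(C(N)=C_{\mathrm{rel}}^2C_{\qo}(K)N^{\sigma_K}=o(N)\). By \citet[Remark~7]{Feischl2022}, this sublinear growth is exactly the admissible form. \Cref{cor:uniform-family} shows that this \(C(N)\) does not depend on the target \(u\) or on the adaptive path, only on \(C_a/\gamma\).

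Third, I would invoke \citet[Theorem~3]{Feischl2022}. Its proof combines linear convergence, which is \Cref{prop:linear-estimator} here, with the standard comparison-lemma argument using (A1), (A2), (A4), the Dörfler restriction, and the mesh-closure and overlay properties of the refinement. That argument yields \(\eta_\ell\lesssim (\#\mathcal T_\ell-\#\mathcal T_0)^{-s}\) for every \(s\) in the approximation class. Inspecting the dependencies, the multiplier depends only on \(C_{\mathrm{lin}}\), \(q_{\mathrm{lin}}\), the axiom constants, the marking parameter and the refinement constants. The first two are structural by \Cref{prop:linear-estimator}.

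The main obstacle is bookkeeping rather than analysis. I would need to verify precisely which form of (A3) Feischl's Theorem~3 requires: the error versus the estimator on the right-hand side, and possibly an \(\varepsilon\)-perturbed variant. I would also need to confirm that the relaxed version is genuinely admitted there, which is the content of Remark~7. If the estimator form is required, reliability settles it as above.
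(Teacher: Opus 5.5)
Your proposal is correct and matches the paper's own proof. The paper likewise notes only that \Cref{thm:main-gqo} supplies the relaxed quasi-orthogonality, with $C(N)=C_{\qo}(K)N^{\sigma_K}=o(N)$, and that \Cref{prop:linear-estimator} supplies the linear convergence needed by \citet[Theorem~3]{Feischl2022}; your extra bookkeeping (using reliability if an estimator-form right-hand side is required, and \Cref{cor:uniform-family} for the target-independence of the multiplier) is consistent with this and harmless.
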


\begin{proof}
\Cref{thm:main-gqo,prop:linear-estimator} provide the relaxed
quasi-orthogonality and linear-convergence ingredients in
\citet[Theorem~3]{Feischl2022}.  The remaining optimality argument applies
unchanged.
\end{proof}

\subsection{Taylor--Hood discretizations of the Stokes problem}

Let \(\Omega\subset\R^d\), \(d\in\{2,3\}\), be a bounded polyhedral domain,
and set
\(X=H_0^1(\Omega)^d\times L_0^2(\Omega)\).
The stationary Stokes problem is governed by the bounded, symmetric, and
indefinite bilinear form
\begin{equation}
\label{eq:stokes-form}
  a((u,p),(v,q))
  :=
  (\nabla u,\nabla v)_{L^2(\Omega)}
  -(p,\operatorname{div}v)_{L^2(\Omega)}
  -(q,\operatorname{div}u)_{L^2(\Omega)}.
\end{equation}
For the standard admissible Taylor--Hood families, the discrete inf-sup
constants are bounded uniformly away from zero; see
\citet{Boffi1994,Boffi1997,Feischl2022}.  Hence
\Cref{thm:main-gqo} applies to every nested sequence of such spaces and gives
relaxed quasi-orthogonality with constants depending only on the continuity
and discrete stability bounds.

The required estimator and refinement properties for adaptive Taylor--Hood
methods were developed in \citet{Gantumur2024,Feischl2019,Feischl2022}.
Consequently, \Cref{cor:afem-optimal} recovers the rate-optimality conclusion
of \citet{Feischl2022}, while providing a direct projection-geometric proof of
its relaxed-quasi-orthogonality step.  In contrast to the earlier full-GQO
argument of \citet{Feischl2019}, the relaxed approaches require neither a
wavelet-type Riesz basis nor Jaffard-class decay; the present proof additionally
avoids matrix representations and \(LU\)-factorization.

\subsection{Nonsymmetric FEM--BEM coupling}

The projection argument does not rely on symmetry.  Consider the
Johnson--N\'ed\'elec coupling of an interior finite element problem with a
boundary element discretization of the exterior Laplace problem.  Although the
coupled bilinear form is nonsymmetric, the conforming product spaces are nested
and satisfy a uniform discrete inf-sup condition on the admissible refinement
family.  The corresponding estimator, discrete-reliability, marking, and
refinement properties were established in
\citet{FeischlFEMBEM,Feischl2022}.

Thus \Cref{thm:main-gqo} supplies relaxed quasi-orthogonality directly from
the common Galerkin projection bound.  Together with the cited adaptive
theory, this gives an alternative derivation of rate optimality in which
relaxed quasi-orthogonality is obtained from projection geometry rather than
through hierarchical coordinates and estimates for \(LU\)-factors.  This
example also demonstrates that neither the projection-angle argument nor its
adaptive consequence requires self-adjointness.

\section{Symmetric indefinite problems and finite-window sign geometry}
\label{sec:sign}

The projection proof above does not require symmetry.  In the self-adjoint
case, however, it admits a useful interpretation in terms of positive and
negative detail modes.

The local spectral splitting used below appeared in
\citet{Gantumur2014v2}.  The unresolved step there was the passage to a
uniformly complemented infinite positive--negative decomposition.  Here we
retain the construction and control its finite-window
assembly by \Cref{cor:block-multiplier}.

Let \(X\) be a real Hilbert space, let
\(T\in\cL(X)\) be a bounded, self-adjoint, and invertible operator, and define
\[
  a(x,y):=\ip{Tx}{y}_X
  \qquad (x,y\in X).
\]
Let \(X_0\subset X_1\subset\cdots\) be nested finite-dimensional spaces,
and let \(Q_\ell\) denote the orthogonal projector onto \(X_\ell\).  Assume
that the compressions
\[
  T_\ell:=Q_\ell T|_{X_\ell}:X_\ell\to X_\ell
\]
are uniformly invertible, and set
\[
  \beta_d:=\inf_{\ell\ge0}\norm{T_\ell^{-1}}^{-1}>0.
\]
Let \(G_\ell\) be the corresponding Galerkin projections.  Since
\(G_\ell=T_\ell^{-1}Q_\ell T\),
we have
\[
  K:=\sup_{\ell\ge0}\norm{G_\ell}
  \le \frac{\norm{T}}{\beta_d}<\infty.
\]

Set \(G_{-1}:=0\) and define
\[
  Y_j:=\ran(G_j-G_{j-1}).
\]
Indices for which \(Y_j=\{0\}\) may be omitted.  The compatibility of the
Galerkin projections implies that finite sums of distinct \(Y_j\) are
algebraically direct.  Moreover,
\[
  a(Y_j,X_{j-1})=0,
\]
and symmetry therefore gives
\[
  a(Y_i,Y_j)=0
  \qquad (i\ne j).
\]

Let $Q_{Y_j}$ be the orthogonal projector onto $Y_j$ and define the local
compression
\begin{equation}\label{eq:Bj}
 B_j:=Q_{Y_j}T|_{Y_j}:Y_j\to Y_j.
\end{equation}

\begin{lemma}[Local detail stability]\label{lem:Bj-inverse}
Each $B_j$ is self-adjoint and invertible, with
\begin{equation}\label{eq:Bj-inverse}
 B_j^{-1}b=(I-G_{j-1})T_j^{-1}b,
 \qquad
 b\in Y_j .
\end{equation}
Thus we have
\begin{equation}
\label{eq:mu-bound}
  \norm{B_j^{-1}}
  \le \frac{1+K}{\beta_d},
  \qquad
  \mu:=
  \inf_j\norm{B_j^{-1}}^{-1}
  \ge \frac{\beta_d}{1+K}.
\end{equation}
\end{lemma}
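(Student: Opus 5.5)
Self-adjointness of \(B_j\) is immediate: for \(y,y'\in Y_j\) we have \(\ip{B_jy}{y'}_X=\ip{Ty}{y'}_X\), which is symmetric because \(T\) is self-adjoint. The substance is the inverse formula \eqref{eq:Bj-inverse}. Since \(Y_j\) is finite-dimensional, it suffices to exhibit a right inverse. Fix \(b\in Y_j\) and put
\[
  x:=T_j^{-1}b\in X_j,
  \qquad
  y:=(I-G_{j-1})x.
\]

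First we check that \(y\in Y_j\). Since \(x\in X_j\), we have \(G_jx=x\), and therefore
\[
  y=(G_j-G_{j-1})x\in\ran(G_j-G_{j-1})=Y_j.
\]
Next we show that \(B_jy=b\), i.e.\ that \(\ip{Ty}{y'}_X=\ip{b}{y'}_X\) for every \(y'\in Y_j\). We expand
\[
  \ip{Ty}{y'}_X=\ip{Tx}{y'}_X-a(G_{j-1}x,y').
\]
\begin{itemize}
\item The first term: since \(y'\in X_j\) and \(Q_jTx=T_jx=b\), we get \(\ip{Tx}{y'}_X=\ip{b}{y'}_X\).
\item The second term: \(G_{j-1}x\in X_{j-1}\), so \(a(G_{j-1}x,y')=a(y',G_{j-1}x)\) by symmetry, and this vanishes because \(a(Y_j,X_{j-1})=0\).
\end{itemize}
The orthogonality \(a(Y_j,X_{j-1})=0\) was noted just before the lemma. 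It follows from the Galerkin relation: for \(x'\in X\) and \(z\in X_{j-1}\),
\[
  a((G_j-G_{j-1})x',z)=a(x',z)-a(x',z)=0.
\]
This step, combining symmetry with the Galerkin orthogonality relation, is the only place where care is needed. In particular, the ordering of the arguments in \(a\) must match the convention \(a(x,y)=\ip{Tx}{y}_X\).

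Thus the map \(b\mapsto(I-G_{j-1})T_j^{-1}b\) is a right inverse of \(B_j\) on the finite-dimensional space \(Y_j\). Hence \(B_j\) is surjective, hence bijective, and this map is \(B_j^{-1}\).

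Finally, the bound follows from the formula. Using \(\norm{I-G_{j-1}}\le1+K\) (with \(G_{-1}=0\) trivially included) and \(\norm{T_j^{-1}}\le\beta_d^{-1}\), we obtain
\[
  \norm{B_j^{-1}b}\le(1+K)\beta_d^{-1}\norm{b},
\]
which is \eqref{eq:mu-bound}. Taking reciprocals and the infimum over \(j\) gives the stated lower bound on \(\mu\).
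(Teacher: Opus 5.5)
Your proof is correct and follows the paper's argument essentially step by step. It uses the same candidate \(y=(I-G_{j-1})T_j^{-1}b\), checks \(B_jy=b\) through \(a(X_{j-1},Y_j)=0\), gets invertibility from surjectivity on the finite-dimensional \(Y_j\), and ends with the same bound \(\norm{I-G_{j-1}}\,\norm{T_j^{-1}}\le(1+K)/\beta_d\); your explicit derivation of the cross-level orthogonality from the Galerkin relation is a small addition that the paper only asserts before the lemma.
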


\begin{proof}
Self-adjointness follows immediately from the self-adjointness of \(T\).
Let \(b\in Y_j\), set
\[
  z:=T_j^{-1}b\in X_j,
  \qquad
  y:=(I-G_{j-1})z.
\]
Since \(G_jz=z\), we have
\(y=(G_j-G_{j-1})z\in Y_j\).
For \(v\in Y_j\subset X_j\), the identity \(T_jz=b\) gives
\[
  \ip{Tz}{v}_X=\ip{b}{v}_X.
\]
Now note that \(G_{j-1}z\in X_{j-1}\), while
\(a(X_{j-1},Y_j)=0\).  Hence we infer
\[
  \ip{Ty}{v}_X
  =
  \ip{Tz}{v}_X-\ip{TG_{j-1}z}{v}_X
  =
  \ip{b}{v}_X ,
\]
yielding \(B_jy=b\).  Thus \(B_j\) is surjective and, since \(Y_j\) is
finite-dimensional, invertible.  We have also proved
\eqref{eq:Bj-inverse}.  
Finally, we see
\[
  \norm{B_j^{-1}}
  \le
  \norm{I-G_{j-1}}\norm{T_j^{-1}}
  \le
  \frac{1+K}{\beta_d},
\]
which yields \eqref{eq:mu-bound}.
\end{proof}

The operator
\(R_j=\sgn(B_j)\)
is a self-adjoint orthogonal involution on \(Y_j\), and
\begin{equation}
\label{eq:local-sign-coercivity}
  a(y,R_jy)
  =
  \ip{B_jy}{R_jy}_X
  =
  \ip{|B_j|y}{y}_X
  \ge
  \mu\norm{y}_X^2
  \qquad (y\in Y_j).
\end{equation}
For a consecutive interval \(I\) of detail levels, set
\(Y_I=\bigoplus_{j\in I}Y_j\)
with the norm inherited from \(X\), and define the finite-window sign operator
\begin{equation}\label{eq:window-sign}
 \mathfrak J_I\Big(\sum_{j\in I}y_j\Big)
 :=\sum_{j\in I}R_jy_j.
\end{equation}
The algebraic directness of the detail spaces makes \(\mathfrak J_I\)
well defined, and
\(\mathfrak J_I^2=I_{Y_I}\).
In general, however, \(\mathfrak J_I\) need not be self-adjoint in the
ambient Hilbert inner product, since the detail spaces \(Y_j\) need not be
mutually orthogonal in that inner product.

\begin{proposition}[Sublinear finite-window sign separation]
\label{prop:sign-window}
If \(n=|I|\), then we have
\begin{equation}
\label{eq:sign-window-bound}
  \norm{\mathfrak J_I}_{\cL(Y_I)}
  \le
  2K(1+\vartheta_K)n^{\sigma_K}.
\end{equation}
Moreover, for $w=\sum_{j\in I}y_j$, one has
\begin{equation}\label{eq:sign-gqo}
 \sum_{j\in I}\norm{y_j}_X^2
 \le\frac{2\norm{T}K(1+\vartheta_K)}{\mu}
 n^{\sigma_K}\norm{w}_X^2.
\end{equation}
\end{proposition}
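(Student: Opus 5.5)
The first bound \eqref{eq:sign-window-bound} is a direct specialization of \Cref{cor:block-multiplier}. The Galerkin projections \((G_\ell)\) form a compatible projection chain with uniform bound \(K\) (\Cref{lem:galerkin-nest}, or directly \(G_\ell=T_\ell^{-1}Q_\ell T\)). Their detail spaces are exactly the \(Y_j=E_j\). Each \(R_j=\sgn(B_j)\) is an orthogonal involution on \(Y_j\), so \(\norm{R_j}\le1\). Hence \(\mathfrak J_I=\mathcal R_I\) in the notation of \Cref{cor:block-multiplier}, and \eqref{eq:block-multiplier} gives \eqref{eq:sign-window-bound}. One bookkeeping point needs checking: omitting indices with \(Y_j=\{0\}\) does not lengthen the window. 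A window of \(n\) nonzero detail levels sits inside the original index interval between its extreme indices. The omitted levels contribute zero summands, and by monotonicity of \(n^{\sigma_K}\) we may instead relabel the chain by discarding repeated projections. That relabeled chain is still compatible with the same \(K\).

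For \eqref{eq:sign-gqo}, I would use the local coercivity \eqref{eq:local-sign-coercivity} together with \(a\)-orthogonality of distinct detail spaces. For \(w=\sum_{j\in I}y_j\), the relation \(a(Y_i,Y_j)=0\) for \(i\ne j\) gives
\[
  a(w,\mathfrak J_Iw)=\sum_{i,j\in I}a(y_i,R_jy_j)=\sum_{j\in I}a(y_j,R_jy_j)\ge\mu\sum_{j\in I}\norm{y_j}_X^2 .
\]
On the other hand,
\[
  a(w,\mathfrak J_Iw)=\ip{Tw}{\mathfrak J_Iw}_X\le\norm{T}\norm{w}_X\norm{\mathfrak J_Iw}_X\le\norm{T}\,\norm{\mathfrak J_I}_{\cL(Y_I)}\norm{w}_X^2 .
\]
Combining these with \eqref{eq:sign-window-bound} gives \eqref{eq:sign-gqo} with exactly the stated constant \(2\norm{T}K(1+\vartheta_K)/\mu\).

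No step is genuinely hard. The one point to verify carefully is the cross-term cancellation \(a(y_i,R_jy_j)=0\) for \(i\ne j\). It holds because \(R_jy_j\in Y_j\) (since \(R_j\) maps \(Y_j\) to itself) and because symmetry of \(a\) upgrades the one-sided relation \(a(Y_j,X_{j-1})=0\) to \(a(Y_i,Y_j)=0\) in both orders. Note that the second estimate is weaker than the direct reverse bound of \Cref{thm:two-sided-square}, since it carries the factor \(n^{\sigma_K}\) from the multiplier norm rather than from a square function. That is acceptable, because the proposition only asserts this constant.
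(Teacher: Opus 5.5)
Your proof is correct and matches the paper's argument: \eqref{eq:sign-window-bound} is \Cref{cor:block-multiplier} applied with $R_j=\sgn(B_j)$, and \eqref{eq:sign-gqo} follows from $\mu\sum_{j\in I}\norm{y_j}_X^2\le\sum_{j\in I}a(y_j,R_jy_j)=a(w,\mathfrak J_Iw)\le\norm{T}\norm{w}_X\norm{\mathfrak J_Iw}_X$; your relabeling of the omitted trivial levels is a harmless detail that the paper leaves implicit. One correction to your closing aside: \eqref{eq:sign-gqo} carries the same factor $n^{\sigma_K}$ as the reverse bound in \Cref{thm:two-sided-square}, and the constants $\norm{T}/\mu$ and $2K$ are not comparable in general (for $T=\Id$ the sign bound is in fact the better one), so it is not ``weaker''.
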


\begin{proof}
The estimate \eqref{eq:sign-window-bound} is
\Cref{cor:block-multiplier}.  
Cross-level \(a\)-orthogonality and
\eqref{eq:local-sign-coercivity} yield
\[
  \mu\sum_{j\in I}\norm{y_j}_X^2
  \le
  \sum_{j\in I}a(y_j,R_jy_j)
  =
  a(w,\mathfrak J_Iw)
  \le
  \norm{T}\norm{w}_X
  \norm{\mathfrak J_Iw}_X .
\]
Then \eqref{eq:sign-gqo} follows from \eqref{eq:sign-window-bound}.
\end{proof}

Thus the sign construction recovers relaxed GQO when the local spectral
signs are assembled only over finite intervals and the norm of the resulting
block sign operator is controlled quantitatively.  Full GQO would follow from
\[
  \sup\bigl\{
    \norm{\mathfrak J_I}_{\cL(Y_I)}:
    I\ \text{a finite consecutive interval}
  \bigr\}<\infty.
\]
Uniform inf-sup stability alone yields only the sublinear finite-window bound
\eqref{eq:sign-window-bound}.  The next section shows that a uniform bound of
the stronger kind can fail even for a fixed self-adjoint involution and a fixed
uniformly stable Galerkin sequence.

\section{Full GQO can fail under uniform inf-sup stability}
\label{sec:counterexample}

We adapt the weighted-Fourier rotated-basis construction of
\citet{Ansorena2023} to produce a fixed self-adjoint involution and a fixed
nested sequence of uniformly stable Galerkin spaces for which full GQO fails.
The same example contains finite-support targets with arbitrarily large
full-tail ratios.

\subsection{A weighted trigonometric Schauder basis}

For \(-1<\beta<1\), set
\[
  w_\beta(t)=|t|^\beta,
  \qquad
  H_\beta=L^2(\Torus,w_\beta(t)\,dt;\R),
\]
where \(\Torus=\R/\Z\) is represented by
\((-\frac12,\frac12]\).  
Let \((x_n)_{n\ge1}\) be the real trigonometric system in its natural
ordering, normalized in unweighted \(L^2(\Torus)\):
\[
  x_1=1,
  \qquad
  x_{2k}=\sqrt2\cos(2\pi kt),
  \qquad
  x_{2k+1}=\sqrt2\sin(2\pi kt),
  \quad k\ge1.
\]
For \(-1<\beta<1\), the trigonometric system above is a
Schauder basis of \(H_\beta\); see
\citet{Ansorena2023}, based on the \(A_2\) theory of
\citet{HMW1973}.

Fix \(0<\alpha<1\).
Multiplication by \(w_\alpha\) defines an isometric isomorphism
\[
  \mathcal M_\alpha:H_{\alpha}\rightarrow H_{-\alpha},
  \qquad
  \mathcal M_\alpha v:=w_\alpha v.
\]
Consequently,
\(z_n=w_\alpha^{-1}x_n\,(n\ge1)\)
is a Schauder basis of \(H_\alpha\), and we have
\begin{equation}\label{eq:trig-biorthogonality}
  \ip{x_n}{z_m}_{H_\alpha}
  =
  \int_\Torus x_nx_m\,dt
  =
  \delta_{nm}.
\end{equation}
Thus \((x_n)\) and \((z_n)\) are biorthogonal in \(H_\alpha\).

Set \(\mathcal H_\alpha=H_\alpha\oplus H_\alpha\), and 
we claim that \((f_j)\) given by
\begin{equation}\label{eq:fn-basis}
  f_{2n-1}=2^{-1/2}(x_n,z_n),
  \qquad
  f_{2n}=2^{-1/2}(x_n,-z_n) ,
  \qquad
  n =1,2,\ldots,
\end{equation}
is a Schauder basis of \(\mathcal H_\alpha\).  
Indeed, let \(S_N^x\) and \(S_N^z\) denote the
partial-sum projections of \((x_n)\) and \((z_n)\), and let
\(x_n^*(u)=\ip{u}{z_n}_{H_\alpha}\) and \(z_n^*(v)=\ip{v}{x_n}_{H_\alpha}\)
be their coordinate functionals.  If \(\Pi_m\) denotes the formal
partial-sum operator associated with \((f_j)\), then we have
\[
  \Pi_{2N}=S_N^x\oplus S_N^z
\]
and
\[
  \Pi_{2N-1}
  =
  S_{N-1}^x\oplus S_{N-1}^z
  +
  \phi_{2N-1}\otimes f_{2N-1},
\]
where
\[
  \phi_{2N-1}(u,v)
  :=
  2^{-1/2}\bigl(x_N^*(u)+z_N^*(v)\bigr).
\]
Since \(\abs{x_n}\le\sqrt2\) and \(w_\alpha^{\pm1}\in L^1(\Torus)\),
the vectors \(x_n,z_n,f_{2n-1}\) and the functionals
\(x_n^*,z_n^*,\phi_{2n-1}\) are uniformly bounded.  Hence we infer
\[
  \sup_{m\ge1}\norm{\Pi_m}<\infty.
\]
The linear span of \((f_j)\) is dense because
\[
  (x_n,0)=2^{-1/2}(f_{2n-1}+f_{2n}),
  \qquad
  (0,z_n)=2^{-1/2}(f_{2n-1}-f_{2n}).
\]
Uniform boundedness of the partial sums and convergence on this dense
span show that \((f_j)_{j\ge1}\) is a Schauder basis of
\(\mathcal H_\alpha\).
In particular, we have \(\Pi_m\to I\) strongly.

Now define \(J:\mathcal H_\alpha\to\mathcal H_\alpha\) by \(J(u,v)=(v,u)\),
which has the obvious property \(J=J^*=J^{-1}\).  
A direct calculation using
\eqref{eq:trig-biorthogonality} gives
\begin{equation}\label{eq:J-orthogonal}
  \ip{Jf_{2n-1}}{f_{2m-1}}=\delta_{nm},
  \qquad
  \ip{Jf_{2n}}{f_{2m}}=-\delta_{nm},
  \qquad
  \ip{Jf_{2n-1}}{f_{2m}}=0.
\end{equation}
Moreover, with
\[
  L_n
  =
  \frac12
  \Big(
    \norm{x_n}_{H_\alpha}^2+
    \norm{z_n}_{H_\alpha}^2
  \Big),
\]
we have
\begin{equation}\label{eq:fn-norm}
  \norm{f_{2n-1}}^2
  =
  \norm{f_{2n}}^2
  =
  L_n,
  \qquad
  1\le L_n\le C_\alpha<\infty.
\end{equation}
Indeed, the upper bound follows from
\(\abs{x_n}\le\sqrt2\) and the integrability of
\(w_\alpha^{\pm1}\), while
\[
  L_n
  \ge
  \norm{x_n}_{H_\alpha}\norm{z_n}_{H_\alpha}
  \ge
  \big|{\ip{x_n}{z_n}_{H_\alpha}}\big|
  =
  1.
\]

\subsection{A uniformly stable Galerkin sequence}

Define
\begin{equation}\label{eq:Xm-counter}
  X_0:=\{0\},
  \qquad
  X_m:=\spann\{f_1,\ldots,f_m\}
  \quad (m\ge1),
\end{equation}
and let
\(\Pi_m:\mathcal H_\alpha\rightarrow X_m\)
be the corresponding coordinate partial-sum projection, with
\(\Pi_0:=0\).  
The preceding subsection gives
\begin{equation}\label{eq:basis-constant}
  K_\alpha:=\sup_{m\ge1}\norm{\Pi_m}<\infty.
\end{equation}

Consider the symmetric indefinite bilinear form
\[
  a(U,V):=\ip{JU}{V}_{\mathcal H_\alpha}.
\]
The \(J\)-orthogonality in \eqref{eq:J-orthogonal} implies
\[
  a(U-\Pi_mU,V_m)=0
  \qquad
  (U\in\mathcal H_\alpha,\;V_m\in X_m).
\]
Hence \(\Pi_m\) is precisely the Galerkin projection onto \(X_m\).
We write
\(\mathcal G_\alpha:=(\Pi_m)_{m\ge0}\).

Since \(J\) is an isometry, the continuous inf-sup constant is \(1\).
The following lemma shows that the discrete constants are uniformly positive.

\begin{lemma}[Exact discrete stability constant]
\label{lem:counter-infsup}
We have
\begin{equation}\label{eq:counter-beta}
  \beta_m
  :=
  \inf_{0\ne U_m\in X_m}
  \sup_{0\ne V_m\in X_m}
  \frac{\abs{\ip{JU_m}{V_m}}}
       {\norm{U_m}\norm{V_m}}
  =\frac1{\norm{\Pi_m}} ,
\end{equation}
for \(m\ge1\).
In particular, this yields
\begin{equation}\label{eq:counter-uniform-infsup}
  \inf_{m\ge1}\beta_m=K_\alpha^{-1}>0.
\end{equation}
\end{lemma}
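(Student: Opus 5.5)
We have to show that the discrete inf-sup constant \(\beta_m\) equals exactly \(1/\norm{\Pi_m}\). The plan is to identify \(\beta_m\) with the reciprocal of \(\norm{(Q_mJ|_{X_m})^{-1}}\), where \(Q_m\) is the orthogonal projector onto \(X_m\). We then compute this inverse explicitly through the Galerkin projection and compare its norm with \(\norm{\Pi_m}\).

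First, set \(T_m:=Q_mJ|_{X_m}:X_m\to X_m\). For \(U_m\in X_m\), the supremum over \(V_m\in X_m\) equals \(\norm{Q_mJU_m}=\norm{T_mU_m}\). Hence \(\beta_m=\inf_{U_m\ne0}\norm{T_mU_m}/\norm{U_m}\). By \eqref{eq:J-orthogonal}, \(T_m\) is injective: if \(T_mU_m=0\), pair with the \(J\)-dual element to get \(U_m=0\). Since \(X_m\) is finite-dimensional, \(T_m\) is invertible and \(\beta_m=\norm{T_m^{-1}}^{-1}\). It therefore suffices to prove \(\norm{T_m^{-1}}=\norm{\Pi_m}\).

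The key identity, valid since \(\Pi_m\) is the Galerkin projection and \(J\) is invertible, is
\[
  \Pi_m=T_m^{-1}Q_mJ .
\]
Indeed, \(a(U-\Pi_mU,V_m)=0\) for all \(V_m\in X_m\) says \(Q_mJ\Pi_mU=Q_mJU\), that is, \(T_m\Pi_mU=Q_mJU\).

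The two inequalities then follow.
\begin{itemize}
\item[\emph{Upper bound.}] Since \(\norm{Q_m}\le1\) and \(J\) is an isometry, \(\norm{\Pi_m}\le\norm{T_m^{-1}}\).
\item[\emph{Lower bound.}] Given \(b\in X_m\), put \(U:=J^{-1}b=Jb\). Then \(Q_mJU=Q_mb=b\), so \(T_m^{-1}b=\Pi_mU\) with \(\norm{U}=\norm{b}\). This gives \(\norm{T_m^{-1}b}\le\norm{\Pi_m}\norm{b}\), i.e.\ \(\norm{T_m^{-1}}\le\norm{\Pi_m}\).
\end{itemize}
Together these give \(\norm{T_m^{-1}}=\norm{\Pi_m}\), which is \eqref{eq:counter-beta}. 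Taking the infimum over \(m\ge1\) and using \eqref{eq:basis-constant} then yields \eqref{eq:counter-uniform-infsup}.

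The main point needing care is the lower bound. It relies on \(J\) being a surjective isometry, so that every \(b\in X_m\) is realized as \(Q_mJU\) with \(\norm{U}=\norm{b}\). The other ingredient is the invertibility of \(T_m\) from the \(J\)-orthogonality of the \(f_j\). Both are routine checks.
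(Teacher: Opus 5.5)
Your proposal is correct and follows the paper's proof essentially step for step. The paper also introduces the compression \(A_m=Q_mJ|_{X_m}\) and gets its invertibility from the nondegeneracy implied by \eqref{eq:J-orthogonal}. It writes \(\Pi_m=A_m^{-1}Q_mJ\), obtains \(\norm{\Pi_m}\le\norm{A_m^{-1}}\) from \(\norm{Q_m}\le1\) and the isometry of \(J\), and obtains the reverse bound \(\norm{A_m^{-1}}\le\norm{\Pi_m}\) by setting \(U=JY\); finally it identifies the inner supremum with \(\norm{A_mU_m}\). The only differences are your name \(T_m\) in place of \(A_m\) and the order in which the supremum identity is recorded.
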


\begin{proof}
Let \(Q_m\) be the orthogonal projection onto \(X_m\), and define
\[
  A_m:=Q_mJ|_{X_m}:X_m\rightarrow X_m.
\]
By \eqref{eq:J-orthogonal}, the restriction of the form
\((U,V)\mapsto\ip{JU}{V}\) to \(X_m\times X_m\) is nondegenerate.
Hence \(A_m\) is invertible.
For \(U\in\mathcal H_\alpha\), the Galerkin characterization of
\(\Pi_mU\) gives
\[
  A_m\Pi_mU=Q_mJU,
\]
and therefore
\(\Pi_m=A_m^{-1}Q_mJ\).
Since \(Q_m\) is nonexpanding and \(J\) is an isometry, we have
\(\norm{\Pi_m}\le\norm{A_m^{-1}}\).
Conversely, for \(Y\in X_m\), set \(U:=JY\).  Then we have \(Q_mJU=Y\), so that
\[
  A_m^{-1}Y=\Pi_m(JY).
\]
As $J$ is an isometry, this yields
\(\norm{A_m^{-1}}\le\norm{\Pi_m}\)
and we conclude
\(\norm{A_m^{-1}}=\norm{\Pi_m}\).

Finally, for \(U_m\in X_m\), we have
\[
  \sup_{0\ne V_m\in X_m}
  \frac{\abs{\ip{JU_m}{V_m}}}{\norm{V_m}}
  =
  \norm{Q_mJU_m}
  =
  \norm{A_mU_m} ,
\]
and hence
\[
  \beta_m
  =
  \inf_{\norm{U_m}=1}\norm{A_mU_m}
  =
  \frac1{\norm{A_m^{-1}}}
  =
  \frac1{\norm{\Pi_m}}. \qedhere
\]
\end{proof}

\subsection{Finite targets with arbitrarily large full-tail ratios}

For \(N\ge2\), let
\begin{equation}\label{eq:Fejer}
  F_N(t)
  =
  \sum_{|k|<N}
  \left(1-\frac{|k|}{N}\right)e^{2\pi ikt}
  =
  \frac1N
  \left(\frac{\sin(\pi Nt)}{\sin(\pi t)}\right)^2
\end{equation}
be the Fej\'er kernel.  In terms of the real trigonometric basis, we have
\begin{equation}\label{eq:Fejer-real-expansion}
  F_N
  =
  x_1
  +
  \sqrt2\sum_{k=1}^{N-1}
  \left(1-\frac{k}{N}\right)x_{2k}.
\end{equation}
In particular, we infer
\(F_N\in\spann\{x_1,\ldots,x_{M_N}\}\) with \(M_N=2N-2\).

Since \((x_n)\) is orthonormal in unweighted \(L^2(\Torus)\),
Parseval's identity and \eqref{eq:Fejer-real-expansion} give
\begin{equation}\label{eq:Fejer-L2}
  \norm{F_N}_{L^2(\Torus)}^2
  =
  1+
  2\sum_{k=1}^{N-1}
  \left(1-\frac{k}{N}\right)^2
  =
  \frac{2N^2+1}{3N}
  \simeq N.
\end{equation}
On the other hand, the standard pointwise estimate
\begin{equation}\label{eq:Fejer-pointwise}
  F_N(t)
  \le
  C\frac{N}{1+N^2t^2},
  \qquad
  -\frac12<t\le\frac12,
\end{equation}
implies
\begin{equation}\label{eq:Fejer-weighted}
  \norm{F_N}_{H_\alpha}^2
  \le
  C_\alpha N^2
  \int_0^{1/2}
  \frac{t^\alpha}{(1+N^2t^2)^2}\,dt
  \le
  C_\alpha N^{1-\alpha}.
\end{equation}

Let
\(q_N = {F_N}/{\norm{F_N}_{L^2(\Torus)}}\).
Then we have
\begin{equation}\label{eq:qN-expansion}
  q_N
  =
  \sum_{n=1}^{M_N}a_nx_n,
  \qquad
  \sum_{n=1}^{M_N}|a_n|^2=1,
\end{equation}
and \eqref{eq:Fejer-L2}--\eqref{eq:Fejer-weighted} yield
\begin{equation}\label{eq:qN-weighted}
  \norm{q_N}_{H_\alpha}^2
  \le
  C_\alpha N^{-\alpha}.
\end{equation}
Now let
\(U_N=(q_N,0)\in\mathcal H_\alpha\).
By \eqref{eq:qN-expansion} and \eqref{eq:fn-basis}, we have
\begin{equation}\label{eq:UN-expansion}
  U_N
  =
  \sum_{n=1}^{M_N}
  \frac{a_n}{\sqrt2}
  \bigl(f_{2n-1}+f_{2n}\bigr).
\end{equation}
In particular, \(U_N\in X_{2M_N}\), and its Galerkin approximation
in \(X_m\) is \(\Pi_mU_N\).
Using
\eqref{eq:UN-expansion}, \eqref{eq:fn-norm}, and \eqref{eq:qN-expansion}, we obtain
\[
  \sum_{m=0}^{\infty}
  \norm{\Pi_{m+1}U_N-\Pi_mU_N}^2
  =
  \sum_{m=0}^{2M_N-1}
  \norm{\Pi_{m+1}U_N-\Pi_mU_N}^2
  =
  \sum_{n=1}^{M_N}|a_n|^2L_n
  \ge1.
\]
On the other hand, \eqref{eq:qN-weighted} gives
\begin{equation}\label{eq:UN-small}
  \norm{U_N}^2
  =
  \norm{q_N}_{H_\alpha}^2
  \le
  C_\alpha N^{-\alpha}.
\end{equation}
We have proved the following.

\begin{theorem}[Target-dependent blow-up]
\label{thm:target-blowup}
Fix \(0<\alpha<1\).  The operator \(J\) and the uniformly stable
Galerkin hierarchy \(\mathcal G_\alpha\) are fixed, while
\[
  \frac{
    \displaystyle
    \sum_{m=0}^{\infty}
    \norm{\Pi_{m+1}U_N-\Pi_mU_N}^2
  }{
    \norm{U_N}^2
  }
  \ge c_\alpha N^\alpha.
\]
Each \(U_N\) has finite support, but these full-tail ratios are not
uniform in the target.
\end{theorem}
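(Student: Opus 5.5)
The plan is to assemble the computations already carried out in this subsection; essentially no new ingredient is needed. The starting point is that \(J\) and the hierarchy \(\mathcal G_\alpha=(\Pi_m)_{m\ge0}\) do not depend on \(N\). Uniform stability is exactly \Cref{lem:counter-infsup} together with \eqref{eq:basis-constant}: the discrete inf-sup constants satisfy \(\beta_m=\norm{\Pi_m}^{-1}\ge K_\alpha^{-1}\). Only the target \(U_N=(q_N,0)\) varies with \(N\).

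For the numerator, I will use the expansion \eqref{eq:UN-expansion}. Because \((f_j)\) is a Schauder basis and \(\Pi_m\) is its partial-sum projection, each increment is a single coordinate term:
\[
\Pi_{m+1}U_N-\Pi_mU_N=c_{m+1}f_{m+1},
\qquad
c_{2n-1}=c_{2n}=\frac{a_n}{\sqrt2}.
\]
All increments with \(m\ge 2M_N\) vanish, so the tail is really a finite sum. By \eqref{eq:fn-norm} the sum equals
\[
\sum_n 2\cdot\frac{|a_n|^2}{2}\,L_n=\sum_{n=1}^{M_N}|a_n|^2L_n\ge \sum_{n=1}^{M_N}|a_n|^2=1,
\]
where the lower bound uses \(L_n\ge1\) and the normalization \eqref{eq:qN-expansion}. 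This also shows that \(U_N\) has finite support, since \(U_N\in X_{2M_N}\).

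For the denominator, the norm on \(\mathcal H_\alpha\) gives \(\norm{U_N}^2=\norm{q_N}_{H_\alpha}^2\). This is bounded by \(C_\alpha N^{-\alpha}\) through \eqref{eq:qN-weighted}, which I will obtain by dividing the Fejér weighted bound \eqref{eq:Fejer-weighted} by \eqref{eq:Fejer-L2}: the quotient is \(N^{1-\alpha}/N=N^{-\alpha}\). Dividing the lower bound \(1\) on the numerator by this gives a ratio of at least \(C_\alpha^{-1}N^\alpha\). So I set \(c_\alpha=C_\alpha^{-1}\).

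The one step that needs real verification is the weighted Fejér estimate \eqref{eq:Fejer-weighted}. I will handle it by symmetry of \(F_N\) and by splitting the integral at \(t=1/N\). On \([0,1/N]\) the integrand is at most \(N^2t^\alpha\), which contributes about \(N^2\cdot N^{-1-\alpha}=N^{1-\alpha}\). On \([1/N,1/2]\) the integrand is at most \(N^{-2}t^{\alpha-4}\), which contributes about \(N^{-2}N^{3-\alpha}=N^{1-\alpha}\). The second integral converges at its upper end because \(\alpha-4<-1\) and \(t\ge 1/N>0\) on that interval. The pointwise bound \eqref{eq:Fejer-pointwise} itself follows from \(|\sin\pi t|\ge 2|t|\) and \(|\sin \pi N t|\le\min(1,\pi N|t|)\). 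Once this is in place, the argument closes.
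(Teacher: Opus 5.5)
Your proposal is correct and is essentially the paper's own argument: $J$ and $\mathcal G_\alpha$ are fixed with $\beta_m\ge K_\alpha^{-1}$. The increments of $U_N=(q_N,0)$ sum to $\sum_{n}|a_n|^2L_n\ge1$ by \eqref{eq:UN-expansion} and $L_n\ge1$, while $\norm{U_N}^2=\norm{q_N}_{H_\alpha}^2\le C_\alpha N^{-\alpha}$ follows from comparing the weighted and unweighted Fej\'er norms. Your added verification of \eqref{eq:Fejer-pointwise} and \eqref{eq:Fejer-weighted} by splitting at $t=1/N$ is also fine. The one imprecision is the remark about convergence ``at the upper end'' of $\int_{1/N}^{1/2}t^{\alpha-4}\,dt$; the relevant point is that $\alpha-4<-1$ makes this integral bounded by its lower-endpoint contribution $N^{3-\alpha}/(3-\alpha)$.
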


The dependence of the exponent on the projection bound is essential.
Indeed, \(C_a=1\), while
\[
  \sup_{m\ge1}\norm{\Pi_m}=K_\alpha,
  \qquad
  \inf_{m\ge1}\beta_m=K_\alpha^{-1}.
\]
Hence the admissible bound \(K=C_a/\gamma\) in
\Cref{thm:main-gqo} is \(K_\alpha\).

Applying that theorem with \(\ell=0\) and window length \(2M_N\), where
\(M_N=2N-2\), yields
\[
  c_\alpha N^\alpha
  \le
  C_{\qo}(K_\alpha)(2M_N)^{\sigma_{K_\alpha}}.
\]
Letting \(N\to\infty\) gives
\(\alpha\le\sigma_{K_\alpha}\).  Since
\(\sigma_K=\log_2(1+\sqrt{1-K^{-2}})\), this is equivalent to
\[
  K_\alpha
  \ge
  \frac1{\sqrt{1-(2^\alpha-1)^2}}.
\]
Thus \(K_\alpha\to\infty\), and consequently
\(\inf_m\beta_m\to0\), as \(\alpha\uparrow1\).  The nearly linear
examples therefore do not contradict the uniform sublinear exponent
available under a common projection bound.

\subsection{One fixed target for which full GQO fails}

For the same operator and Galerkin hierarchy, full GQO fails for a single
fixed target.

\begin{theorem}[Failure of full GQO]
\label{thm:full-failure}
There exists \(U\in\mathcal H_\alpha\) such that
\begin{equation}\label{eq:full-failure}
  \sum_{m=0}^{\infty}
  \norm{\Pi_{m+1}U-\Pi_mU}^2
  =
  \infty.
\end{equation}
In particular, this implies
\(\mathfrak C_{\mathcal G_\alpha}(\infty)=\infty\).
\end{theorem}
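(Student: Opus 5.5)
The plan is to deduce the existence of a single bad target from \Cref{thm:target-blowup} by a uniform boundedness (closed graph) argument. I will argue by contradiction. Suppose that every \(U\in\mathcal H_\alpha\) satisfies
\[
  \sum_{m=0}^{\infty}\norm{\Pi_{m+1}U-\Pi_mU}^2<\infty .
\]
Then the linear map
\[
  \mathcal D:\mathcal H_\alpha\to\ell^2(\mathcal H_\alpha),
  \qquad
  \mathcal D U:=\bigl((\Pi_{m+1}-\Pi_m)U\bigr)_{m\ge0},
\]
is everywhere defined.

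The next step is to show that \(\mathcal D\) has closed graph. Suppose \(U_k\to U\) and \(\mathcal DU_k\to Y=(Y_m)\) in \(\ell^2\). For each fixed \(m\), the operator \(\Pi_{m+1}-\Pi_m\) is bounded, with norm at most \(2K_\alpha\) by \eqref{eq:basis-constant}. Hence the \(m\)-th component of \(\mathcal DU_k\) converges to \((\Pi_{m+1}-\Pi_m)U\). Convergence in \(\ell^2\) implies componentwise convergence, so \(Y_m=(\Pi_{m+1}-\Pi_m)U\) for every \(m\), and \(\mathcal DU=Y\). The closed graph theorem then shows that \(\mathcal D\) is bounded, with some constant \(C_*\).

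This boundedness contradicts \Cref{thm:target-blowup}. For the finite targets \(U_N\) constructed there,
\[
  \norm{\mathcal DU_N}^2\ge c_\alpha N^\alpha\norm{U_N}^2,
\]
and letting \(N\to\infty\) this exceeds \(C_*^2\norm{U_N}^2\). Therefore some \(U\in\mathcal H_\alpha\) satisfies \eqref{eq:full-failure}.

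The consequence \(\mathfrak C_{\mathcal G_\alpha}(\infty)=\infty\) follows from \eqref{eq:C-window} with \(\ell=0\). Since \(\Pi_0=0\), the denominator is \(\norm{U}^2\), which is nonzero because \(U\ne0\): if \(U=0\) the sum would vanish. Letting \(N\to\infty\) in the numerator gives the divergent sum, so \(\sup_N\mathfrak C_{\mathcal G_\alpha}(N)=\infty\). Alternatively, \Cref{thm:target-blowup} alone already yields \(\mathfrak C_{\mathcal G_\alpha}(\infty)\ge c_\alpha N^\alpha\) for every \(N\).

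If an explicit target is preferred to the Baire-category argument, one could instead take
\[
  U=\sum_k 2^{-k}\,U_{N_k}/\norm{U_{N_k}} ,
\]
with rapidly growing \(N_k\). This route, however, requires controlling the cross-interactions between the blocks, since the supports of the \(U_{N_k}\) overlap from index \(1\). The closed graph route avoids this difficulty entirely. The only point there that needs care is the well-definedness of \(\mathcal D\) as a map into \(\ell^2\), and this is exactly the contradiction hypothesis.
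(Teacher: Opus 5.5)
Your proof is correct and takes the paper's approach. You assume every target has square-summable increments, obtain a bounded operator from the closed graph theorem, and contradict this with the finite targets \(U_N\). The only difference is cosmetic: the paper applies the closed graph theorem to the scalar coefficient map \(U\mapsto(c_j(U))_j\in\ell^2\) and then uses \(\norm{f_j}\ge1\), whereas you apply it directly to the increment map into \(\ell^2(\mathcal H_\alpha)\) and invoke \Cref{thm:target-blowup}, which already contains that same lower bound.
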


\begin{proof}
Let \(\mathcal CU=(c_j(U))_{j\ge1}\) be the coefficient sequence of \(U\)
with respect to \((f_j)\).  Suppose that \(\mathcal CU\in\ell^2\) for every
\(U\in\mathcal H_\alpha\).  Then
\(\mathcal C:\mathcal H_\alpha\to\ell^2\) has closed graph.  Indeed, each
coordinate functional \(c_j\) is bounded, since
\((\Pi_j-\Pi_{j-1})U=c_j(U)f_j\)
and therefore
\[
  |c_j(U)|
  \le
  \frac{\norm{\Pi_j}+\norm{\Pi_{j-1}}}{\norm{f_j}}\norm{U}.
\]
Thus, if \(U_r\to U\) in \(\mathcal H_\alpha\) and
\(\mathcal CU_r\to(c_j)_{j\ge1}\) in \(\ell^2\), then
\(c_j=\lim_r c_j(U_r)=c_j(U)\) for every \(j\).  By the closed graph
theorem, \(\mathcal C\) would be bounded.

This contradicts \eqref{eq:UN-expansion} and
\eqref{eq:qN-weighted}, since
\[
  \norm{\mathcal CU_N}_{\ell^2}^2
  =
  \sum_{n=1}^{M_N}|a_n|^2
  =
  1,
  \qquad\text{while}\qquad
  \norm{U_N}\rightarrow0.
\]
Hence some \(U\in\mathcal H_\alpha\) has
\(\mathcal CU\notin\ell^2\).  Since
\(\Pi_{m+1}U-\Pi_mU=c_{m+1}(U)f_{m+1}\), \eqref{eq:fn-norm} gives
\[
  \sum_{m=0}^{\infty}
  \norm{\Pi_{m+1}U-\Pi_mU}^2
  =
  \sum_{j=1}^{\infty}|c_j(U)|^2\norm{f_j}^2
  \ge
  \sum_{j=1}^{\infty}|c_j(U)|^2
  =
  \infty.
  \qedhere
\]
\end{proof}

\section{Discussion and further questions}
\label{sec:discussion}

\subsection{Uniform stability versus full GQO}

Uniform inf-sup stability yields sublinear finite-window GQO, but does not
imply full GQO.
A stronger sufficient condition is similarity to an orthogonal
projection chain.
Indeed, suppose that
\[
  G_\ell=SQ_\ell S^{-1},
  \qquad \ell\ge0,
\]
where \(S\) is boundedly invertible and \((Q_\ell)\) is an orthogonal
projection chain.  Then
\[
  \sum_{k=\ell}^{\infty}
  \norm{G_{k+1}u-G_ku}^2
  \le
  \norm{S}^2\norm{S^{-1}}^2
  \norm{u-G_\ell u}^2.
\]
Thus the counterexample of \Cref{sec:counterexample} cannot be similar to
an orthogonal projection chain by any boundedly invertible operator.

At the orthogonal endpoint \(K=1\), the sharp bound is
\(\mathfrak C_{\cG}(N)\le1\).  More generally, additional hierarchical
structure may yield full GQO, whereas uniform inf-sup stability alone gives
only the sublinear finite-window estimate of \Cref{thm:main-gqo}.  The
counterexample shows that these conclusions are genuinely distinct.

\subsection{Sharp exponents and the
\texorpdfstring{\(LU\)}{LU} connection}

Define
\[
  \sigma_{\mathrm{univ}}(K)
  :=
  \inf\Bigl\{
    \sigma\ge0:
    \exists\,C_K<\infty\ \text{such that}\
    \mathfrak C_{\cG}(N)\le C_KN^\sigma
  \Bigr\},
\]
where the estimate is required for every real Hilbert space, every
\(N\ge1\), and every compatible projection chain satisfying
\(\sup_\ell\norm{G_\ell}\le K\).  \Cref{thm:main-gqo} gives
\[
  \sigma_{\mathrm{univ}}(K)
  \le
  \log_2\bigl(1+\sqrt{1-K^{-2}}\bigr).
\]
On the other hand, the weighted Fourier example with projection bound
\(K_\alpha\) gives
\(\sigma_{\mathrm{univ}}(K_\alpha)\ge\alpha\).  Since
\(\sigma_{\mathrm{univ}}\) is nondecreasing, the same lower bound holds
for every \(K\ge K_\alpha\).  An explicit lower bound at a prescribed
\(K\) would require quantitative upper bounds for \(K_\alpha\) in terms
of \(\alpha\), or examples whose projection bounds are controlled
directly.  The sharp dependence of \(\sigma_{\mathrm{univ}}(K)\) on
\(K\) remains open.

The role of \(LU\)-factorization depends on the desired conclusion.
In \citet{FeischlFEMBEM,Feischl2019}, hierarchical Riesz bases and
Jaffard-type decay yield bounded infinite block-\(LU\) factors and hence
full GQO.  By contrast, \citet{Feischl2022} uses sub-square-root growth
of finite block-\(LU\) factors to obtain relaxed GQO from uniform
inf-sup stability alone.  The projection argument of this paper
bypasses factorization for the latter purpose, while
\Cref{cor:finite-block-lu} recovers the corresponding finite
block-\(LU\) growth estimate with an explicit exponent.  It does not
replace the additional locality and decay analysis needed for bounded
infinite factors and full GQO.

\subsection{Further settings}

Mixed finite element exterior calculus is a natural setting for the projection
framework.  When the discrete product spaces are nested, the harmonic space is
trivial, and the discrete inf-sup constants are uniform,
\Cref{thm:main-gqo} applies directly in the natural mixed norm; compare
\citet{LiNaturalNorm}.  With nontrivial harmonic forms, the standard
formulations involve moving discrete harmonic spaces and are therefore not
immediately covered by the present nested-space theorem; see
\citet{DemlowHirani,LiFEEC}.  This case requires additional analysis and is
deferred to separate work.

From the adaptive viewpoint, \citet{Feischl2022} showed that full GQO is not
needed for linear convergence or rate optimality.  It suffices that cumulative
nonorthogonality over windows of length \(N\) be bounded by
\(C(N)=o(N)\), since estimator reduction at each step dominates such
sublinear growth on sufficiently long windows.  The present paper derives this
bound directly from projection geometry, with an explicit exponent and
prefactor depending only on the common Galerkin projection bound.  Full GQO
remains valuable for uniform infinite-tail estimates, but it is neither needed
for the standard adaptive conclusions nor implied by uniform inf-sup stability.

\section*{Acknowledgements}

I am especially grateful to Kris van der Zee (University of Nottingham)
for sustained and insightful discussions of this problem, particularly
during an Oberwolfach workshop.  
Our repeated attempts to understand and repair the earlier argument, and
his persistence in returning to the problem, strongly influenced the
perspective developed in this paper.
This research was supported in part by the Natural Sciences
and Engineering Research Council of Canada (NSERC) through a Discovery
Grant.

\bibliographystyle{abbrvnat}
\bibliography{gqo}

\end{document}